\numberwithin{equation}{section}
\newtheorem{dfn}{Definition}[section]
\newtheorem{thm}[dfn]{Theorem}
\newtheorem{lma}[dfn]{Lemma}
\newtheorem{ppsn}[dfn]{Proposition}
\newtheorem{crlre}[dfn]{Corollary}
\newtheorem{rmrk}[dfn]{Remark}
\newcommand{\D}{\mathbb{D}}
\newcommand{\C}{\mathbb{C}}		
\newcommand{\fcl}{\mathcal{F}}
\newcommand{\mcl}{\mathcal{M}}
\newcommand{\wcl}{\mathcal{W}}
\newcommand{\hilh}{\mathbb{H}}
\newcommand{\kcl}{\mathcal{K}}
\newcommand{\hdcr}{{H}^2_{\mathbb{C}^r}(\mathbb{D})}
\newcommand{\hdcrone}{H^2_{\mathbb{C}^{r+1}}(\mathbb{D})}
\newcommand{\hdcm}{{H}^2_{\mathbb{C}^m}(\mathbb{D})}
\newcommand{\hdcc}{{H}^2_{\mathbb{C}}(\mathbb{D})}
\DeclarePairedDelimiterX{\norm}[1]{\lVert}{\rVert}{#1}
\begin{document}

\title[ALMOST INVARIANT SUBSPACES OF THE SHIFT OPERATOR ON VECTOR-VALUED HARDY SPACES]{ALMOST INVARIANT SUBSPACES OF THE SHIFT OPERATOR ON VECTOR-VALUED HARDY SPACES}


\author[Chattopadhyay] {Arup Chattopadhyay}
\address{Department of Mathematics, Indian Institute of Technology Guwahati, Guwahati, 781039, India}
\email{arupchatt@iitg.ac.in, 2003arupchattopadhyay@gmail.com}

\author[Das]{Soma Das}
\address{Department of Mathematics, Indian Institute of Technology Guwahati, Guwahati, 781039, India}
\email{soma18@iitg.ac.in, dsoma994@gmail.com}

\author[Pradhan]{Chandan Pradhan}
\address{Department of Mathematics, Indian Institute of Technology Guwahati, Guwahati, 781039, India}
\email{chandan.math@iitg.ac.in, chandan.pradhan2108@gmail.com}

\subjclass[2010]{47A13, 47A15, 47A80, 46E20, 47B38, 47B32,  30H10}

\keywords{Vector valued Hardy space, nearly invariant subspaces, almost invariant subspaces, shift operator, Beurling's theorem, half space, multiplier operator}

\begin{abstract}
	In this article, we characterize nearly invariant subspaces of finite defect for the backward shift operator acting on the vector-valued Hardy space which is a vectorial generalization of a result of Chalendar-Gallardo-Partington (C-G-P). Using this  characterization of nearly invariant subspace under the backward shift we completely describe the almost invariant subspaces for the shift and its adjoint acting on the vector valued Hardy space.
\end{abstract}
\maketitle

\section{Introduction}
\hspace*{3mm} In 1988, Hitt \cite{HIT} first introduces the notion of nearly invariant subspaces under the backward shift operator acting on the scalar-valued Hardy space which he used as a tool for classifying the simply shift-invariant subspaces of the Hardy space of an annulus. In his paper he rather called it as  \textquotedblleft weakly invariant subspace under the backward shift \textquotedblright. Later  Sarason \cite{SR} further investigated these spaces and modified Hitt's algorithm for scalar-valued Hardy space to study the kernels of Toeplitz operators. In 2010,  Chalendar-Chevrot-Partington (C-C-P) \cite{CCP} gives a complete characterization of nearly invariant subspaces  under the backward shift operator acting on the vector-valued Hardy space,  providing a vectorial generalization of a result of Hitt. Recently Chalendar-Gallardo-Partington (C-G-P) \cite{CGP} introduce the notion of nearly invariant subspace of finite defect for the backward shift operator acting on the scalar valued Hardy space as a generalization of nearly invariant subspaces and provides a complete characterization of these spaces in terms of backward shift invariant subspaces. Using this characterization they also described the almost-invariant subspaces for the shift and its adjoint acting on the scalar valued Hardy space. In this connection we should mention that the relation between nearly invariant subspaces under the backward shift and the kernel of Toeplitz operators has been discussed in \cite{NC}.

In this paper we further study nearly invariant subspaces of finite defect under the backward shift operator acting on the  vector valued Hardy space and provides a vectorial generalization of C-G-P algorithm. As a consequences we completely characterize nearly invariant subspaces  of finite defect under the backward shift in terms of backward shift invariant subspaces. Furthermore,  using the characterization of nearly invariant subspace under the backward shift we completely describe the almost invariant subspaces for the shift and its adjoint acting on the vector valued Hardy space. Moreover at the end we also provide a connection between the orthocomplement of a nearly invariant subspaces of finite defect under the backward shift and the shift invariant subspaces on the vector valued Hardy space.  

The $\mathbb{C}^m$- valued Hardy space \cite{JP} over the unit disc $\mathbb{D}$ is denoted by $\hdcm$ and defined by $$\hdcm:=\Big\{F(z)=\sum_{n\geq 0} A_nz^n:~\|F\|^2= \sum_{n\geq 0}~\norm{A_n}_{\mathbb{C}^m}^2<\infty,~A_n\in\mathbb{C}^m\Big\}.$$ We can also view the above Hilbert space as the direct sum of $m$-copies of $H^2_{\mathbb{C}}(\D)$ or  sometimes it is useful to see the above space as a tensor product of two Hilbert spaces $H^2_{\mathbb{C}}(\D)$ and $\mathbb{C}^m$, that is, $$H^2_{\mathbb{C}^m}(\D)  \equiv   \underbrace{H^2_{\mathbb{C}}(\D)\oplus \cdots\oplus H^2_{\mathbb{C}}(\D)}_{m}\equiv H^2_{\mathbb{C}}(\D)\otimes \mathbb{C}^m.$$
Let $S$ denote the forward shift operator  (multiplication by the independent variable)
acting on $\hdcm$, that is, $SF(z)=zF(z)$,~$z\in \D$. The adjoint of $S$ is denoted by $S^*$ and defined in $\hdcm$ as the operator 
$$S^*(F)(z)=\dfrac{F(z)-F(0)}{z},~~ F\in\hdcm$$ which is known as backward shift operator.
The Banach space of all $\mathcal{L}(\mathbb{C}^r,\mathbb{C}^m)$ (set of all bounded linear operators from $\mathbb{C}^r$ to $\mathbb{C}^m$)- valued  bounded analytic functions on $\mathbb{D}$ is denoted by 
$H^{\infty}_{\mathcal{L}(\mathbb{C}^r,\mathbb{C}^m)}(\D)$. Each $\Theta\in H^{\infty}_{\mathcal{L}(\mathbb{C}^r,\mathbb{C}^m)}(\D)$ induces a bounded linear map $T_{\Theta}\in H^{\infty}_{\mathcal{L}(\mathbb{C}^r,\mathbb{C}^m)}(\D)$ defined by
$$T_{\Theta}F(z)=\Theta(z)F(z).~~(F\in H^{2}_{\mathbb{C}^r}(\D))$$
The elements of $H^{\infty}_{\mathcal{L}(\mathbb{C}^r,\mathbb{C}^m)}(\D)$  are called the \emph{multipliers} and are determined by
$$\Theta\in H^{\infty}_{\mathcal{L}(\mathbb{C}^r,\mathbb{C}^m)}(\D)\textit{~if~and~only~if~} ST_{\Theta}=T_{\Theta}S,$$
where the shift $S$ on the left hand side and the right hand side act on $ H ^{2}_{\mathbb{C}^m}(\D)$ and $ H ^{2}_{\mathbb{C}^r}(\D)$ respectively. A multiplier $\Theta \in  H ^{\infty}_{\mathcal{L}(\mathbb{C}^r,\mathbb{C}^m)}(\D)$ is said to be \emph{inner} if $M_{\Theta}$ is an isometry, or equivalently, 
$\Theta(z)\in \mathcal{L}(\mathbb{C}^r,\mathbb{C}^m)$ is an isometry almost everywhere with respect to the Lebesgue measure on $\mathbb{T}$ (unit circle). Inner multipliers are among the most important tools for classifying invariant subspaces of reproducing kernel Hilbert spaces. For instance: 

\begin{thm}\label{a1}
	(Beurling-Lax-Halmos \cite{NF}) 
	A non-zero closed subspace $\mathcal{M}\subseteq  H ^{2}_{\mathbb{C}^m}(\D)$ is shift
	invariant if and only if there exists an inner multiplier $\Theta \in H ^{\infty}_{\mathcal{L}(\mathbb{C}^r,\mathbb{C}^m)}(\D)$ such that
	$$\mathcal{M} = \Theta  H ^{2}_{\mathbb{C}^r}(\D),$$
	for some $r$ ($1\leq r\leq m$).  
\end{thm}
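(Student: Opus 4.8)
The plan is to prove both implications, treating the forward (``if'') direction as routine and concentrating on the converse. For the easy direction, I would start from an inner $\Theta \in H^{\infty}_{\mathcal{L}(\mathbb{C}^r,\mathbb{C}^m)}(\D)$ and observe that $T_{\Theta}$ is an isometry (this is precisely the definition of inner), so its range $\Theta H^2_{\mathbb{C}^r}(\D)$ is automatically closed; invariance is immediate from the intertwining relation $ST_{\Theta}=T_{\Theta}S$, which gives $S(\Theta F)=\Theta(SF)\in\Theta H^2_{\mathbb{C}^r}(\D)$ for every $F\in H^2_{\mathbb{C}^r}(\D)$.

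For the converse, the main tool will be the Wold decomposition. Given a non-zero closed shift-invariant subspace $\mathcal{M}$, the restriction $V:=S|_{\mathcal{M}}$ is again an isometry (since $S$ is isometric on all of $\hdcm$), and it is pure, i.e.\ has no unitary part, because $\bigcap_{n\ge 0}S^n\hdcm=\{0\}$ forces $\bigcap_{n\ge 0}V^n\mathcal{M}=\{0\}$. Hence, setting the wandering subspace $\mathcal{W}:=\mathcal{M}\ominus S\mathcal{M}$, the Wold decomposition yields the orthogonal sum $\mathcal{M}=\bigoplus_{n\ge 0}S^n\mathcal{W}$, and the vectors $\{S^n f\}$ for $f,g$ ranging over $\mathcal{W}$ satisfy the wandering relations $\langle S^n f, S^k g\rangle=0$ whenever $n\neq k$.

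Next I would fix an orthonormal basis $\{e_1,\dots,e_r\}$ of $\mathcal{W}$ and assemble it into a matrix-valued function $\Theta(z)$ whose $j$-th column is $e_j(z)$, so that $\Theta(z)\in\mathcal{L}(\mathbb{C}^r,\mathbb{C}^m)$. The wandering relations $\langle S^n e_i, S^k e_j\rangle=\delta_{nk}\delta_{ij}$ translate, via the Fourier expansion of the boundary inner products $z\mapsto\langle e_i(z),e_j(z)\rangle_{\mathbb{C}^m}$ on $\mathbb{T}$, into the pointwise identity $\Theta(z)^*\Theta(z)=I_r$ a.e. This one identity does three jobs at once: it shows $\Theta(z)$ is an isometry a.e.\ (so $\Theta$ is inner, and being bounded in operator norm by $1$ it lies in $H^{\infty}_{\mathcal{L}(\mathbb{C}^r,\mathbb{C}^m)}(\D)$); it forces $r\le m$, since an isometry $\mathbb{C}^r\to\mathbb{C}^m$ exists only when $r\le m$; and it makes $T_{\Theta}$ an isometry carrying the orthonormal basis $\{z^n\varepsilon_j\}$ of $H^2_{\mathbb{C}^r}(\D)$ onto the orthonormal basis $\{z^n e_j\}$ of $\mathcal{M}$, whence $\mathcal{M}=\Theta H^2_{\mathbb{C}^r}(\D)$.

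I expect the main obstacle to be the passage from the abstract orthogonality relations on $\{S^n e_j\}$ to the concrete pointwise condition $\Theta(z)^*\Theta(z)=I_r$ a.e.: one must pass to boundary values, justify interchanging the inner product with the integral over $\mathbb{T}$, and argue that the vanishing of all non-zero Fourier coefficients of $z\mapsto\langle e_i(z),e_j(z)\rangle$ pins down the entries of $\Theta^*\Theta$. Once this identity is in hand, the bound $r\le m$, the membership $\Theta\in H^{\infty}_{\mathcal{L}(\mathbb{C}^r,\mathbb{C}^m)}(\D)$, and the final equality of subspaces all follow essentially for free.
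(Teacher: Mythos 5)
Your proposal is correct. Note, however, that the paper itself offers no proof of this statement: it is quoted as the classical Beurling--Lax--Halmos theorem with a citation to Sz.-Nagy--Foia\c{s}, so the only meaningful comparison is with the classical argument, and yours is exactly that argument --- the Halmos wandering-subspace proof. The steps all check out: $S|_{\mathcal{M}}$ is a pure isometry because $\bigcap_{n\ge 0}S^n H^2_{\mathbb{C}^m}(\D)=\{0\}$, the Wold decomposition gives $\mathcal{M}=\bigoplus_{n\ge 0}S^n\mathcal{W}$ with $\mathcal{W}=\mathcal{M}\ominus S\mathcal{M}$, and the passage from the wandering relations to $\Theta(z)^*\Theta(z)=I_r$ a.e.\ (which you correctly flag as the one technical point) works because each function $z\mapsto\langle e_i(z),e_j(z)\rangle_{\mathbb{C}^m}$ lies in $L^1(\mathbb{T})$ and has all Fourier coefficients equal to $\delta_{nk}\delta_{ij}$, hence equals $\delta_{ij}$ a.e.; this same identity also rules out $\dim\mathcal{W}>m$, since otherwise one would have more than $m$ orthonormal vectors in $\mathbb{C}^m$ at almost every boundary point, and $r\ge 1$ follows since a nonzero pure isometry has a nonzero wandering subspace.
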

\noindent Consequently, the space $\mathcal{M}^{\perp}$ of 
$H^2_{\mathbb{C}^m}(\D)$ which is invariant under $S^*$ (backward shift) can be represented as 
$$\mathcal{K}_{\Theta}: = \mathcal{M}^{\perp} =  H^2_{\mathbb{C}^m}(\D) \ominus \Theta  H^2_{\mathbb{C}^r}(\D),$$
which also known as model spaces \cite{EMV1,EMV2,NB}.
\begin{dfn}
	A closed subspace $\mathcal{M}$ of $\hdcm$ is said to be \emph{almost-invariant} for S if there exists a finite dimensional subspace $\fcl$ of $\hdcm$ such that $$S(\mathcal{M})\subseteq \mathcal{M}+\fcl.$$
\end{dfn}
\noindent The space $\fcl$ is called the defect space and the smallest possible dimension of $\fcl$ is called defect of the space $\mathcal{M}$. Moreover, a space $\mathcal{M}$ is called a \emph{half space} if $\mathcal{M}$ has infinite dimension and infinite co-dimension. The study of almost-invariant half-spaces of any bonded linear operators $T$ acting on complex Banach spaces was initiated in 2009 due to Androulakis, Popov, Tcaciuc and Troitsky \cite{APTT} and later it further studied by Popov, Tcaciuc, Sirotkin and Wallis \cite{PT, SW, T} to investigate the structure of almost-invariant half-spaces in more general setting. In this connection it is easy to observe that every subspace which is not a half-space is clearly almost-invariant under any operator. A well-known result due to Beurling \cite{B} states that if $\mathcal{M}$ is a $S$-invariant subspace of $ H^2_{\mathbb{C}}(\D)$,  then $\mathcal{M}$ can be represented as 
$$\mathcal{M}=\theta  H^2_{\mathbb{C}}(\D), $$
where $\theta\in  H^\infty_{\mathbb{C}}(\D)$ is an inner function (that is, $\theta$ is a bounded holomorphic function on $\mathbb{D}$ and $|\theta|=1$ a.e. on $\mathbb{T}$). In this regard it is not difficult to conclude that the shift invariant subspace $\mathcal{M}$ of $ H ^{2}_{\mathbb{C}}(\D)$ is a half-space if and only if $\mathcal{M}=\theta  H^2_{\mathbb{C}}(\D)$ with $\theta$ is not rational (that means $\theta$ is not a product of finitely many Blaschke factor) \cite{RSN}.

The purpose of this paper is to characterize almost invariant subspaces for the shift and its adjoint acting on the vector valued Hardy space in terms of invariant subspaces for the adjoint $S^*$ with finite defect. To achieve our goal we give a connection between nearly invariant subspaces with finite defect and invariant subspaces for $S^*$ in the vector valued Hardy space (see. Theorem~\ref{ab}).  



\begin{dfn}
	A closed subspace $\mathcal{M}$ of $\hdcm$ is said to be nearly invariant for $S^*$ if every element  $F\in \mathcal{M}$ with $F(0)=0$ satisfies $S^*F\in \mathcal{M}$.
\end{dfn}

The paper is organized as follows: In Section 2 we give a connection between nearly invariant subspaces for $S^*$ and almost-invariant subspaces for $S$ in vector valued Hardy spaces. In other words we generalize some results of (\cite{CGP} ,~Section 2) in the vector valued setting. Section 3 deals with the main result of this paper. At the end  we obtain a connection between the orthocomplement of a class of nearly invariant subspaces of finite defect under the backward shift and the shift invariant subspaces on the vector valued Hardy space.  





\section{Preliminary Results}

The space $\hdcm$ can also be defined as the collection of all $\mathbb{C}^m$-valued analytic functions $F$ on $\mathbb{D}$ such that 
$$\norm{F}= \Big[~\sup_{0\leq r<1} \frac{1}{2\pi} \int_0^{2\pi} \vert F(re^{i\theta})\vert^2~ d\theta~\Big]^{\frac{1}{2}}<\infty.$$ 
Moreover the nontangential boundary limit (or radial limit) $$F(e^{i\theta}):= \lim\limits_{r\rightarrow 1-}F(re^{i\theta})$$ exists almost everywhere on $\mathbb{T}$ (for more details see \cite{NB}, I.3.11). Therefore $\hdcm$ can be embedded isomertically as a closed subspace of
$L^2(\mathbb{T},\mathbb{C}^m)$ by identifying $\hdcm$ through the nontangential boundary limits of the $\hdcm$ functions. Furthermore $L^2(\mathbb{T},\mathbb{C}^m)$ can be decomposed in the following way 
$$L^2(\mathbb{T},\mathbb{C}^m)=\hdcm \oplus \overline{ H _0^2},$$ where $\overline{ H _0^2}=\{F\in L^2(\mathbb{T},\mathbb{C}^m):\overline{F}\in \hdcm ~and~ F(0)=0 \}$. 
\begin{dfn}
	A closed subspace $\mathcal{M}$ of $ H ^2_{\mathbb{C}}(\mathbb{D})$ is said to be nearly invariant for the backward shift $S^*$ if every element  $f\in \mathcal{M}$ with $f(0)=0$ satisfies $S^*f\in \mathcal{M}$.
\end{dfn}
As in the introduction we already noticed that nearly $S^*$-invariant subspaces of $ H ^2_{\mathbb{C}}(\D)$ were introduced and characterized by Hitt \cite{HIT} and Sarason \cite{SR}. The vectorial generalization of Hitt's  and Sarason's result was due to  Chalendar-Chevrot-Partington (C-C-P) \cite{CCP} which says the following: Every non trivial nearly $S^*$-invariant subspace $\mathcal{M}$ of $\hdcm$ has the form $\mathcal{M}=F_0\mathcal{K},$ where $F_0$ is the $m\times r~(1\leq r \leq m)$ matrix whose columns are $\{W_1,W_2,...,W_r\}$ which forms an  orthonormal basis of $\wcl:=\mcl\ominus(\mcl\cap z\hdcm)$ and $\mathcal{K}$ is a $S^*$- invariant subspace of $\hdcr$. Therefore by Beurling-Lax-Halmos theorem there exists an inner multiplier $\Theta \in  H ^{\infty}_{ \mathcal{L}(\mathbb{C}^{r^\prime},\mathbb{C}^r)}(\mathbb{D})$  for some $r^\prime$ ($\leq r$) such that $$\mathcal{K}=\mathcal{K}_{\Theta}:= \hdcr \ominus\Theta  H ^2_{\mathbb{C}^{r^\prime}}(\D)$$ with an extra property that $\Theta (0)=0$. The operator $$T_{F_0}:\hdcr \rightarrow \hdcm$$  $$G\longmapsto P(F_0G),$$ where $P$ is the Fourier projection of the $L^1(\mathbb{T},\mathbb{C}^m)$ function $F_0G$ on $ H ^2_{\mathbb{C}^m}(\mathbb{D})$, as in the scalar case it is an isometry from $\mathcal{K}_{\Theta}~ onto~ \mathcal{M}$.\\
As discussed earlier throughout this section we are going to provide vectorial generalization of  results given in (\cite{CGP} ,~Section 2).
Now we are in a position to prove our first result   which produces a connection between nearly invariant subspaces  for $S^*$ and almost-invariant subspaces  for $S$ in the vector valued case.
\begin{ppsn}\label{pr1}
	Let $F_0$ and $\mathcal{K}_{\Theta}$ be as above. Then the nearly $S^*$- invariant subspace  $\mathcal{M}=F_0 \mathcal{K}_{\Theta}$ is an almost invariant for S with defect $r^\prime$. In particular, if the inner multiplier $\Theta \in  H ^{\infty}_{\mathcal{L}(\mathbb{C}^r)}(\mathbb{D})$ is of the form:
	\begin{equation}\label{1234}
	\Theta=
	\begin{pmatrix}  
	\theta _1 & 0 & 0 & . & . & . & 0 \\[1pt]
	0 & \theta_2 & 0 & . & . & . & 0\\
	. & . & . & . & . & . & .\\
	. & . & . & . & . & . & .\\
	. & . & . & . & . & . & .\\
	0 & 0 & 0& . & . & . & \theta_r \\
	\end{pmatrix}_{r\times r}, 
	\end{equation}
	where  $\{\theta _1,\theta_2,\ldots,\theta_r\}$ is a collection of inner functions of $\hdcc$ with at least one $\theta _i$ (say) is not rational, then $\mathcal{M}=F_0 \mathcal{K}_{\Theta}$ is an almost-invariant half space with defect $r$.
\end{ppsn}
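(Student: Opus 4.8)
The plan is to read off the action of $S$ on $\mcl$ through the identification $\mcl=T_{F_0}\kcl_\Theta$, using the two structural facts supplied by the C-C-P theorem: that on $\kcl_\Theta$ the map $T_{F_0}$ is honest multiplication $G\mapsto F_0G$ with $F_0G\in\hdcm$, and that $F_0$ is \emph{a.e.\ an isometry}, $F_0(\zeta)^*F_0(\zeta)=I_r$ for a.e.\ $\zeta\in\mathbb{T}$ (so each $F_0(\zeta)$ is injective). I would fix $F=F_0G\in\mcl$ with $G\in\kcl_\Theta$; then $SF=zF_0G=F_0(zG)$, and I would decompose $zG\in\hdcr$ along $\hdcr=\kcl_\Theta\oplus\Theta H^2_{\mathbb{C}^{r^\prime}}(\D)$ as $zG=P_{\kcl_\Theta}(zG)+P_{\Theta H^2}(zG)$. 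The first piece stays in the model space, so $F_0P_{\kcl_\Theta}(zG)=T_{F_0}\bigl(P_{\kcl_\Theta}(zG)\bigr)\in\mcl$, and all the ``leakage'' of $SF$ out of $\mcl$ is carried by the second piece.

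That second piece is computed explicitly. Since $\Theta$ is inner, $\{\Theta z^ke_j:k\ge0,\ 1\le j\le r^\prime\}$ is an orthonormal basis of $\Theta H^2_{\mathbb{C}^{r^\prime}}(\D)$, and $\langle zG,\Theta z^ke_j\rangle=\langle G,z^{k-1}\Theta e_j\rangle$. For $k\ge1$ this vanishes, as $z^{k-1}\Theta e_j\in\Theta H^2_{\mathbb{C}^{r^\prime}}(\D)$ is orthogonal to $G\in\kcl_\Theta$. The term $k=0$ is where the hypothesis $\Theta(0)=0$ is indispensable: writing $\Theta(z)=z\Phi(z)$ with $\Phi\in H^\infty_{\mathcal{L}(\mathbb{C}^{r^\prime},\mathbb{C}^r)}(\D)$, one gets $\bar z\,\Theta e_j=\Phi e_j\in\hdcr$, so the $k=0$ coefficient is $\langle G,\Phi e_j\rangle$. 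Hence $P_{\Theta H^2}(zG)=\sum_{j=1}^{r^\prime}\langle G,\Phi e_j\rangle\,\Theta e_j$ and
\[
SF=T_{F_0}\bigl(P_{\kcl_\Theta}(zG)\bigr)+\sum_{j=1}^{r^\prime}\langle G,\Phi e_j\rangle\,F_0\Theta e_j,
\]
which gives $S\mcl\subseteq\mcl+\fcl$ with $\fcl=\operatorname{span}\{F_0\Theta e_1,\dots,F_0\Theta e_{r^\prime}\}$; each $F_0\Theta e_j\in\hdcm$ because $F_0$ is bounded a.e.\ and $\Theta e_j\in\hdcr$.

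To pin the defect at exactly $r^\prime$ I would check two independence statements, both reducing to injectivity of $F_0(\zeta)$ and $\Theta(\zeta)$ together with $\kcl_\Theta\cap\Theta H^2_{\mathbb{C}^{r^\prime}}(\D)=\{0\}$. First, the $F_0\Theta e_j$ are independent modulo $\mcl$: if $\sum_jc_jF_0\Theta e_j=F_0g$ with $g\in\kcl_\Theta$, then $F_0(\Theta v-g)=0$ with $v=\sum_jc_je_j$, so $\Theta v=g\in\kcl_\Theta\cap\Theta H^2_{\mathbb{C}^{r^\prime}}(\D)=\{0\}$ and $v=0$. Second, the coefficient map $G\mapsto(\langle G,\Phi e_j\rangle)_{j}$ is onto $\mathbb{C}^{r^\prime}$, since $\{P_{\kcl_\Theta}\Phi e_j\}$ are independent (if $\Phi v=\bar z\,\Theta v\in\Theta H^2_{\mathbb{C}^{r^\prime}}(\D)$, say $\Phi v=\Theta w$, then $\Theta v=\Theta(zw)$ gives $v=zw$, impossible for a nonzero constant vector, so $v=0$). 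Applying $P_{\mcl^\perp}$ to the displayed identity, the two facts give $\dim P_{\mcl^\perp}(S\mcl)=r^\prime$, and since any admissible $\fcl$ must satisfy $P_{\mcl^\perp}(S\mcl)\subseteq P_{\mcl^\perp}\fcl$, the defect is exactly $r^\prime$.

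For the diagonal case $\Theta=\operatorname{diag}(\theta_1,\dots,\theta_r)$ one has $r^\prime=r$, so the defect is $r$. It remains to see $\mcl$ is a half space. As $T_{F_0}$ is an isometry of $\kcl_\Theta$ onto $\mcl$ and $\kcl_\Theta=\bigoplus_{i=1}^r\bigl(\hdcc\ominus\theta_i\hdcc\bigr)$, a single non-rational $\theta_i$ makes the corresponding summand, hence $\mcl$, infinite dimensional. For infinite co-dimension I would exhibit $F_0\Theta\hdcr\subseteq\mcl^\perp$: for $h\in\hdcr$ and $g\in\kcl_\Theta$, $\langle F_0\Theta h,F_0g\rangle=\langle\Theta h,g\rangle=0$ by $F_0^*F_0=I_r$ a.e.\ and $\Theta h\perp\kcl_\Theta$, while $h\mapsto F_0\Theta h$ is an isometry (again by $F_0^*F_0=I_r$ and $\Theta$ inner), so $F_0\Theta\hdcr$ is an infinite-dimensional subspace of $\mcl^\perp$. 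Hence $\mcl$ is an almost-invariant half space with defect $r$. I expect the genuine obstacle to be the exact defect count rather than the half-space bookkeeping: the upper bound needs the precise coefficient formula for $P_{\Theta H^2}(zG)$ (where $\Theta(0)=0$ cannot be dropped), and the matching lower bound needs the two independence-modulo-$\mcl$ verifications above.
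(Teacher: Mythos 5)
Your containment argument (defect at most $r'$) is correct and is in substance the paper's own: where you expand $P_{\Theta H^2}(zG)$ in the orthonormal basis $\{\Theta z^k e_j\}$, the paper computes $(\mathcal{K}_\Theta+\operatorname{span}\{\Theta e_i\})^\perp=z\Theta H^2_{\mathbb{C}^{r'}}(\mathbb{D})$ and notes $z\Theta H^2_{\mathbb{C}^{r'}}(\mathbb{D})\perp z\mathcal{K}_\Theta$; both proofs then push the decomposition of $zG$ forward by multiplication by $F_0$. (Incidentally, $\Theta(0)=0$ is not needed for this containment at all --- the $k=0$ coefficient is just some scalar and the leakage lies in $\operatorname{span}\{\Theta e_j\}$ regardless; it is only needed for your explicit formula $\langle G,\Phi e_j\rangle$, which enters your lower bound.)

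The genuine gap is your opening premise that C-C-P supplies $F_0(\zeta)^*F_0(\zeta)=I_r$ for a.e.\ $\zeta\in\mathbb{T}$. It does not, and the claim is false in general: C-C-P gives only that the \emph{operator} $T_{F_0}$ is isometric from the model space $\mathcal{K}_\Theta$ onto $\mathcal{M}$, an averaged condition over that particular subspace. Already in the scalar case, take $g(z)=(1+z)/\sqrt{2}$ (outer, unit norm, $g(0)\neq 0$) and $\mathcal{M}=\mathbb{C}g$: this is nearly $S^*$-invariant with $F_0=g$, $\Theta(z)=z$, multiplication by $g$ is isometric on the constants $\mathcal{K}_z$, yet $|g|$ is far from $1$ a.e. This false premise is used essentially in two places. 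First, for the a.e.\ injectivity of $F_0(\zeta)$, on which both of your independence claims rest; here the conclusion is true but needs a different proof: no nonzero element of $\mathcal{W}$ vanishes at $0$ (it would lie in $\mathcal{M}\cap zH^2_{\mathbb{C}^m}(\mathbb{D})$, which is orthogonal to $\mathcal{W}$), so $F_0(0)$ has rank $r$; the $r\times r$ minors of $F_0$ are sums of products of $H^2$ functions, hence lie in the Nevanlinna class, and a Nevanlinna-class function not identically zero is nonzero a.e.\ on $\mathbb{T}$, so $F_0(\zeta)$ is injective a.e. Second, and fatally as written, in your infinite-codimension argument: without pointwise isometry, $F_0\Theta h$ need not even belong to $H^2_{\mathbb{C}^m}(\mathbb{D})$ for arbitrary $h\in H^2_{\mathbb{C}^r}(\mathbb{D})$ (a product of two $H^2$-type factors is only in $H^1$), and both the orthogonality $\langle F_0\Theta h,F_0g\rangle=\langle\Theta h,g\rangle$ and the isometry of $h\mapsto F_0\Theta h$ fail, so the subspace $F_0\Theta H^2_{\mathbb{C}^r}(\mathbb{D})\subseteq\mathcal{M}^\perp$ is not available. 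A repair in your own spirit: the vectors $F_0z^k\Theta e_1$, $k\geq 0$, \emph{do} lie in $H^2_{\mathbb{C}^m}(\mathbb{D})$ --- because $z^k\Theta e_1$ is bounded and $F_0$ has $H^2$ entries, which also corrects your backwards justification ``$F_0$ is bounded a.e.'' for $F_0\Theta e_j\in H^2_{\mathbb{C}^m}(\mathbb{D})$ --- and the a.e.\ injectivity above shows no nontrivial linear combination of them lies in $\mathcal{M}=F_0\mathcal{K}_\Theta$, which forces $\operatorname{codim}\mathcal{M}=\infty$. With these two repairs your argument goes through, and it then establishes more than the paper's proof does: the paper only proves $S\mathcal{M}\subseteq\mathcal{M}+\operatorname{span}\{F_0\Theta e_i\}$ and asserts the half-space property in one line, whereas your computation $\dim P_{\mathcal{M}^\perp}(S\mathcal{M})=r'$ is what actually pins the defect at exactly $r'$.
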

\begin{proof}
	Since $\Theta \in  H ^{\infty}_
	{\mathcal{L}(\mathbb{C}^{r\prime},\mathbb{C}^r)}(\D)$ is an inner multiplier, then the map  $$T_{\Theta}: H ^2_{\mathbb{C}^{r\prime}}(\D)\rightarrow \hdcr$$ is an isometry. Let $\{e_i\}_{i=1}^{r\prime}$ be an orthonormal basis of $\mathbb{C}^{r\prime}$. Now consider $\widetilde {\Theta_i} = \Theta e_i \in \hdcr,~ \text{for}~ i=1,2,...,r{\prime}.$ Note that  $T_{\Theta}$ is an isometry implies $\{\widetilde {\Theta_i}\}_{i=1}^{r\prime}$ is linearly independent in $\hdcr$. Moreover $$(\mathcal{K}_{\Theta}+~span \{\widetilde{\Theta_i}\}_{i=1}^{r\prime})^\perp=\Theta  H ^2_{\mathbb{C}^{r\prime}}(\D)\cap (span \{\widetilde{\Theta_i}\}_{i=1}^{r\prime})^\perp=z\Theta H ^2_{\mathbb{C}^{r\prime}}(\D).$$ On the other hand for $G\in  H ^2_{\mathbb{C}^{r\prime}}(\D)$ and $F\in \mathcal{K}_{\Theta}$ we have $\langle z\Theta G, zF\rangle =0$ and hence
	$z\Theta H ^2_{\mathbb{C}^{r\prime}}(\D) \subseteq (z\mathcal{K}_{\Theta})^\perp$. Thus $S\mathcal{K}_{\Theta} \subseteq \mathcal{K}_{\Theta}+span \{\widetilde{\Theta_i}\}_{i=1}^{r\prime} $. Since $T_{F_0}$ is an isometry from $\mathcal{K}_{\Theta}$ onto $\mathcal{M}$ we have  $S\mathcal{M} \subseteq \mathcal{M} + span\{F_0 \widetilde{\Theta _i}\}_{i=1}^{r\prime}$.
	This proves that $\mathcal{M}$ is almost invariant under $S$ with defect $r^\prime$.
	
	For the second part we assume that  $\theta _j$ is not rational for some $j\in \{1,\ldots, r\}$  which immediately implies that $\theta_j \hdcc$ is a half space. Note that since $\Theta$ is of the form \eqref{1234}, then $\Theta  H ^2_{\mathbb{C}^{r}}(\D)$ is again a half space which concludes that $\mathcal{M}$ is also a half space. This concludes the proof.
\end{proof}
The following two lemmas are very useful to conclude  that the orthocomplement of some nearly invariant subspaces for $S^*$ are also almost invariant for $S$ of some finite defect in $\hdcm$. 
\begin{lma}\label{lm1}
	Let $\Psi \in  H ^{\infty}_{\mathcal{L}(\C ^m)}(\D)$ be an inner multiplier of the form 
	\begin{equation}\label{12345}
	\Psi=
	\begin{pmatrix}  
	\psi _1 & 0 & 0 & . & . & . & 0 \\[1pt]
	0 & \psi_2 & 0 & . & . & . & 0\\
	. & . & . & . & . & . & .\\
	. & . & . & . & . & . & .\\
	. & . & . & . & . & . & .\\
	0 & 0 & 0& . & . & . & \psi_m \\
	\end{pmatrix}_{m\times m.},
	\end{equation}
	where  $\{\psi _1,\psi_2,\ldots,\psi_m\}$ is a collection of inner functions of $\hdcc$. Then $(\Psi \mathcal{K}_{\Theta})^\perp = \Psi \Theta  H ^2 _{\mathbb{C}^r}(\D) \oplus \mathcal{K}_\Psi$ for any inner multiplier $\Theta \in  H ^{\infty}_{\mathcal{L}(\C ^r,\C ^m)}(\D)$ with $r \leq m$.
\end{lma}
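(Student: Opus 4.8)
The plan is to reduce the whole statement to the single structural fact that multiplication by an inner multiplier is an isometry, and that isometries carry orthogonal direct sums to orthogonal direct sums. Concretely, since $\Psi$ is inner the operator $T_\Psi \colon \hdcm \to \hdcm$ is an isometry with range $\Psi\hdcm$, so that
$$\hdcm = \Psi\hdcm \oplus \mathcal{K}_\Psi,$$
where $\mathcal{K}_\Psi = \hdcm \ominus \Psi\hdcm$ is the model space attached to $\Psi$. I would record this as the first step; note that the diagonal form \eqref{12345} of $\Psi$ plays no role here beyond guaranteeing that $\Psi$ is inner, so this is really a fact about arbitrary inner $\Psi$.

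Next I would feed the Beurling--Lax--Halmos decomposition associated with $\Theta$, namely
$$\hdcm = \Theta\hdcr \oplus \mathcal{K}_\Theta,$$
through the isometry $T_\Psi$. Because $T_\Psi$ preserves inner products, applying it to this orthogonal decomposition of $\hdcm$ yields an orthogonal decomposition of the range $\Psi\hdcm$:
$$\Psi\hdcm = \Psi\Theta\hdcr \oplus \Psi\mathcal{K}_\Theta.$$
The only point to check is that the two summands remain orthogonal, which is immediate: for $G \in \hdcr$ and $F \in \mathcal{K}_\Theta$ one has $\langle \Psi\Theta G, \Psi F\rangle = \langle \Theta G, F\rangle = 0$ since $F \perp \Theta\hdcr$.

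Combining the two displays gives the triple orthogonal decomposition
$$\hdcm = \Psi\Theta\hdcr \oplus \Psi\mathcal{K}_\Theta \oplus \mathcal{K}_\Psi,$$
in which all three summands are mutually orthogonal: the first two lie inside $\Psi\hdcm$ and are therefore orthogonal to $\mathcal{K}_\Psi$, while their mutual orthogonality was just established. Reading off the orthogonal complement of the middle summand then produces exactly
$$(\Psi\mathcal{K}_\Theta)^\perp = \Psi\Theta\hdcr \oplus \mathcal{K}_\Psi,$$
which is the claim.

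I expect no serious obstacle here: the argument is essentially bookkeeping of orthogonal complements. The product $\Psi\Theta$ is automatically an inner multiplier (a.e.\ on $\mathbb{T}$ it is a composition of isometries $\mathbb{C}^r \to \mathbb{C}^m \to \mathbb{C}^m$), so $\Psi\Theta\hdcr$ is a bona fide shift-invariant subspace and the right-hand side is genuinely a direct sum of an invariant subspace and a model-type space. If anything requires care it is only the verification that $\Psi$ being an isometry a.e.\ on $\mathbb{T}$ lets one pass freely between the $L^2(\mathbb{T},\mathbb{C}^m)$ inner product and the Hardy-space inner product when computing the relevant orthogonalities; this is routine.
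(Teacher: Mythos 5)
Your proof is correct, but it takes a genuinely different route from the paper's. The paper argues dually, through the adjoint: for $F\in\hdcm$ and $K\in\mathcal{K}_{\Theta}$ it writes $\langle F,\Psi K\rangle_{\hdcm}=\langle T_{\Psi}^{*}F,K\rangle_{L^{2}(\mathbb{T},\C^{m})}$, so that $F\perp\Psi\mathcal{K}_{\Theta}$ exactly when $T_{\Psi}^{*}F$ lies in the $L^{2}$-orthocomplement $\Theta\hdcr\oplus\overline{H^{2}_{0}}$ of $\mathcal{K}_{\Theta}$, and then multiplies back by $\Psi$ to convert this into $F\in\Psi\Theta\hdcr\oplus\mathcal{K}_{\Psi}$, using the boundary identification $\mathcal{K}_{\Psi}=(\Psi\hdcm)^{\perp}=\hdcm\cap\Psi\overline{H^{2}_{0}}$. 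You instead work entirely inside $\hdcm$, on the primal side: you push the Beurling--Lax--Halmos decomposition $\hdcm=\Theta\hdcr\oplus\mathcal{K}_{\Theta}$ through the isometry $T_{\Psi}$, combine it with $\hdcm=\Psi\hdcm\oplus\mathcal{K}_{\Psi}$ to obtain the triple orthogonal decomposition $\hdcm=\Psi\Theta\hdcr\oplus\Psi\mathcal{K}_{\Theta}\oplus\mathcal{K}_{\Psi}$, and read off the complement of the middle summand (your justification that $b=0$ whenever $x\perp B$ in $H=A\oplus B\oplus C$ is the right one, and the closedness of isometric images disposes of the completeness issues). Your version is more elementary -- no boundary values, no $L^{2}$ computation -- and, as you observe, it uses nothing about the diagonal form \eqref{12345} beyond innerness of $\Psi$; it would even survive for a non-square inner $\Psi$. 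The paper's dual computation implicitly uses that $\Psi$ is unitary-valued a.e.\ on $\mathbb{T}$ in order to pass from $T_{\Psi}^{*}F\in\Theta\hdcr\oplus\overline{H^{2}_{0}}$ back to a statement about $F$; this is automatic for square inner multipliers, but it is a hypothesis your argument does not need. What the paper's route buys in exchange is the explicit boundary description $\mathcal{K}_{\Psi}=\hdcm\cap\Psi\overline{H^{2}_{0}}$ of the model space, obtained as a byproduct of the same computation. Both proofs ultimately rest on the same single fact, namely that $T_{\Psi}$ is an isometry.
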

\begin{proof}
	Let $F\in \hdcm$. Then for all $K\in \mathcal{K}_{\Theta}$ we have 
	$$\langle F,\Psi {K} \rangle _{\hdcm} 
	= \langle F,\Psi K \rangle _{L^2(\mathbb{T},\mathbb{C}^m)}
	=\langle T^*_\Psi F,K \rangle _{L^2(\mathbb{T},\mathbb{C}^m)}.$$
	Therefore $T^*_\Psi F \in \Theta \hdcr\oplus \overline{ H ^2_0}
	\textit{~if~and~only~if~} F \in \Psi \Theta \hdcr\oplus \mathcal{K}_{\Psi}$, where $ \mathcal{K}_{\Psi}=(\Psi\hdcm)^\perp = \hdcm\cap \Psi \overline{ H ^2_0}$.
	This completes the proof.
\end{proof}

\begin{lma}\label{lm2}
	Let $\Psi \in  H ^{\infty}_{\mathcal{L}(\C ^m)}(\D)$ be as in the statement of lemma \ref{lm1} with an extra assumption that $\psi _i(0)\neq 0$ for each $i\in \{1,2,\ldots,m\}$. Then  $\Psi \mathcal{K}_{\Theta}$ is nearly $S^*$ invariant for any inner multiplier $\Theta \in  H ^{\infty}_{\mathcal{L}(\C^r,\C^m)}(\D) $  with $r\leq m.$
\end{lma}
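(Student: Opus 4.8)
The plan is to verify the defining condition of nearly $S^*$-invariance directly from the diagonal structure of $\Psi$. First I would record that $\Psi\mathcal{K}_{\Theta}$ is genuinely a closed subspace of $\hdcm$: since $\Psi$ is an inner multiplier the operator $T_{\Psi}$ is an isometry, so it carries the closed subspace $\mathcal{K}_{\Theta}$ onto a closed subspace. It then suffices to fix an arbitrary $F\in\Psi\mathcal{K}_{\Theta}$ with $F(0)=0$ and to show that $S^{*}F\in\Psi\mathcal{K}_{\Theta}$. Writing $F=\Psi K$ for some $K\in\mathcal{K}_{\Theta}$, the whole argument will reduce to two observations about $K$.

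The key step, and the only place where the extra hypothesis $\psi_{i}(0)\neq 0$ enters, is to transfer the vanishing condition from $F$ to $K$. Because $\Psi$ is the diagonal matrix with scalar inner entries $\psi_{1},\dots,\psi_{m}$, evaluation at the origin yields $F(0)=\Psi(0)K(0)$, where $\Psi(0)=\mathrm{diag}\bigl(\psi_{1}(0),\dots,\psi_{m}(0)\bigr)$ is an invertible matrix precisely because each $\psi_{i}(0)\neq 0$. Hence $F(0)=0$ forces $K(0)=0$. I expect this implication to be the main obstacle in the sense that it is the only nonformal point: without the nonvanishing assumption one could have $K(0)\neq 0$ lying in the kernel of $\Psi(0)$, and the conclusion would break down; everything else is routine.

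Once $K(0)=0$ is established, I would simply compute $S^{*}F$. Since $F(0)=0$ we have $S^{*}F=F/z$, and because scalar division by $z$ commutes with the matrix multiplier $\Psi$, while $K(0)=0$ gives $K/z=S^{*}K\in\hdcm$, one obtains $S^{*}F=\Psi\,(K/z)=\Psi\,S^{*}K$. Finally I would invoke the standard fact that the model space $\mathcal{K}_{\Theta}=\hdcm\ominus\Theta\hdcr$ is invariant under the backward shift, being the orthogonal complement of the shift-invariant subspace $\Theta\hdcr$. Therefore $S^{*}K\in\mathcal{K}_{\Theta}$, and consequently $S^{*}F=\Psi\,S^{*}K\in\Psi\mathcal{K}_{\Theta}$, which is exactly the nearly $S^{*}$-invariance of $\Psi\mathcal{K}_{\Theta}$.
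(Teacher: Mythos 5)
Your proposal is correct and follows essentially the same route as the paper's proof: write $F=\Psi K$, use the invertibility of the diagonal matrix $\Psi(0)=\mathrm{diag}\bigl(\psi_1(0),\dots,\psi_m(0)\bigr)$ to deduce $K(0)=0$, compute $S^{*}F=\Psi S^{*}K$, and invoke the $S^{*}$-invariance of $\mathcal{K}_{\Theta}$. Your added remarks (closedness of $\Psi\mathcal{K}_{\Theta}$ via the isometry $T_{\Psi}$, and the explicit identification of where $\psi_i(0)\neq 0$ is needed) are sound refinements of the same argument.
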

\begin{proof}
	Let $F\in \Psi \mathcal{K}_{\Theta}$ be such that $F(0)=0$. Then $$F=\Psi K,~ \text{where} ~K\in  \mathcal{K}_{\Theta}~ \text{and} ~ F(0)= \Psi (0)K(0)=0.$$ Since each $\psi _i(0)\neq 0$ for each $i\in \{1,2,\ldots,m\}$, then from the above we conclude that $K(0)=0$. Thus $$S^*F(z) = \dfrac{F(z)-F(0)}{z}=\dfrac{\psi(z)K(z)}{z}=\Psi (z)S^*K(z), \forall z\in D.$$ Since $\mathcal{K}_{\Theta}$ is $S^*$ invariant, then $S^*F \in \Psi \mathcal{K}_{\Theta}$. This completes the proof.
\end{proof}
Combining lemma \ref{lm1} and lemma \ref{lm2} we have the following result.
\begin{ppsn}\label{pr2}
	Let $\Psi \in  H ^{\infty}_{\mathcal{L}(\C ^m)}(\D)$ be as in the statement of lemma \ref{lm1} and let $\Theta \in  H ^{\infty}_{\mathcal{L}(\C^r,\C^m)}(\D) ~for~ r\leq m$. Then $(\Psi \mathcal{K}_{\Theta})^\perp$ is an almost invariant subspace for $S$ in $\hdcm$ with defect $m$. In particular if 
	\begin{equation}\label{123456}
	\Theta=
	\begin{pmatrix}  
	\theta _1 & 0 & 0 & . & . & . & 0 \\[1pt]
	0 & \theta_2 & 0 & . & . & . & 0\\
	. & . & . & . & . & . & .\\
	. & . & . & . & . & . & .\\
	. & . & . & . & . & . & .\\
	0 & 0 & 0& . & . & . & \theta_m \\
	\end{pmatrix}_{m\times m,}, 
	\end{equation}
	where  $\{\theta _1,\theta_2,\ldots,\theta_m\}$ is a collection of inner functions of $\hdcc$ with at least one $\theta _i$ (say) is not rational, then $(\Psi \mathcal{K}_{\Theta})^\perp$ is an almost invariant half space of defect $m$.	
\end{ppsn}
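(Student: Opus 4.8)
The plan is to derive the general statement directly from Lemma~\ref{lm1} and to treat the model-space summand exactly as the forward-shift argument in Proposition~\ref{pr1}. By Lemma~\ref{lm1} I would start from the orthogonal decomposition
$$(\Psi\mathcal{K}_{\Theta})^{\perp}=\Psi\Theta H^2_{\mathbb{C}^r}(\mathbb{D})\oplus\mathcal{K}_{\Psi}.$$
The first summand is harmless for the defect: since $\Psi$ is a square inner multiplier and $\Theta$ is inner, the product $\Psi\Theta$ is again an inner multiplier, so $z\cdot(\Psi\Theta G)=\Psi\Theta(zG)$ shows $\Psi\Theta H^2_{\mathbb{C}^r}(\mathbb{D})$ is $S$-invariant and hence is mapped into $(\Psi\mathcal{K}_{\Theta})^{\perp}$ by $S$. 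Thus everything reduces to controlling $S\mathcal{K}_{\Psi}$, and I would show a single $m$-dimensional space suffices there.

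For the model-space part I would mimic Proposition~\ref{pr1} verbatim. Let $\{e_i\}_{i=1}^m$ be the standard basis of $\mathbb{C}^m$ and set $\widetilde{\Psi_i}=\Psi e_i$, the $i$-th column of $\Psi$; since $T_{\Psi}$ is an isometry these are orthonormal and span $\Psi\mathbb{C}^m$. From $H^2_{\mathbb{C}^m}(\mathbb{D})=\mathbb{C}^m\oplus zH^2_{\mathbb{C}^m}(\mathbb{D})$ and the isometry $T_{\Psi}$ one gets $\Psi H^2_{\mathbb{C}^m}(\mathbb{D})=\mathrm{span}\{\widetilde{\Psi_i}\}_{i=1}^m\oplus z\Psi H^2_{\mathbb{C}^m}(\mathbb{D})$, and combined with $\mathcal{K}_{\Psi}=H^2_{\mathbb{C}^m}(\mathbb{D})\ominus\Psi H^2_{\mathbb{C}^m}(\mathbb{D})$ this yields
$$\big(\mathcal{K}_{\Psi}+\mathrm{span}\{\widetilde{\Psi_i}\}_{i=1}^m\big)^{\perp}=z\Psi H^2_{\mathbb{C}^m}(\mathbb{D}).$$
On the other hand, for $k\in\mathcal{K}_{\Psi}$ and $g\in H^2_{\mathbb{C}^m}(\mathbb{D})$ I have $\langle zk,z\Psi g\rangle=\langle k,\Psi g\rangle=0$, so $S\mathcal{K}_{\Psi}\perp z\Psi H^2_{\mathbb{C}^m}(\mathbb{D})$, that is $S\mathcal{K}_{\Psi}\subseteq\mathcal{K}_{\Psi}+\mathrm{span}\{\widetilde{\Psi_i}\}_{i=1}^m$. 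Adding the $S$-invariance of the first summand gives
$$S\big((\Psi\mathcal{K}_{\Theta})^{\perp}\big)\subseteq(\Psi\mathcal{K}_{\Theta})^{\perp}+\mathrm{span}\{\widetilde{\Psi_i}\}_{i=1}^m,$$
which exhibits the required $m$-dimensional defect space and settles the first assertion. (Lemma~\ref{lm2} is the complementary near-invariance side of the picture: when additionally $\psi_i(0)\neq0$ it makes $\Psi\mathcal{K}_{\Theta}$ itself nearly $S^*$-invariant; note the orthocomplement computation above needs only the hypotheses of Lemma~\ref{lm1}.)

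For the half-space statement I would specialise to the diagonal $\Theta$ of \eqref{123456}, so $r=m$, with some $\theta_j$ non-rational. Then $\Psi\mathcal{K}_{\Theta}\supseteq\psi_j\mathcal{K}_{\theta_j}$, and since $\theta_j$ is not a finite Blaschke product $\mathcal{K}_{\theta_j}$ is infinite-dimensional; hence $\Psi\mathcal{K}_{\Theta}$ is infinite-dimensional, making $(\Psi\mathcal{K}_{\Theta})^{\perp}$ of infinite codimension, while at the same time $(\Psi\mathcal{K}_{\Theta})^{\perp}\supseteq\Psi\Theta H^2_{\mathbb{C}^m}(\mathbb{D})$ is infinite-dimensional. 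Thus $(\Psi\mathcal{K}_{\Theta})^{\perp}$ is a half space, and by the first part it is almost invariant for $S$ with defect $m$. The two orthogonality identities are routine; the only genuinely delicate point is the claim that the defect equals $m$ rather than merely being at most $m$, which would require checking that the columns $\widetilde{\Psi_i}$ stay linearly independent after projection onto $\Psi\mathcal{K}_{\Theta}$, i.e. that the constants are not absorbed by $\Psi\mathcal{K}_{\Theta}$. Following the convention of Proposition~\ref{pr1}, however, exhibiting the explicit $m$-dimensional defect space is exactly what is asserted.
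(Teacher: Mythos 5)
Your proof is correct and follows essentially the same route as the paper: it invokes Lemma~\ref{lm1} to decompose $(\Psi\mathcal{K}_{\Theta})^{\perp}=\Psi\Theta H^2_{\mathbb{C}^r}(\mathbb{D})\oplus\mathcal{K}_{\Psi}$, repeats the Proposition~\ref{pr1} argument to get $S\mathcal{K}_{\Psi}\subseteq\mathcal{K}_{\Psi}+\mathrm{span}\{\widetilde{\Psi_i}\}_{i=1}^m$, and combines the two, then argues infinite dimension and codimension for the half-space claim. You merely make explicit some points the paper leaves implicit (the $S$-invariance of the summand $\Psi\Theta H^2_{\mathbb{C}^r}(\mathbb{D})$, the witnesses for infinite dimension/codimension, and the caveat that the argument strictly exhibits defect at most $m$), which is consistent with, not divergent from, the paper's proof.
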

\begin{proof}
	By repeating the similar kind of argument as in  the proof of proposition \ref{pr1} we conclude that $$S\mathcal{K}_{\Psi}\subset \mathcal{K}_{\Psi}+span\{\widetilde{\Psi_i}\}_{i=1}^m,$$ where $\widetilde{\Psi_i}=\Psi e_i$ and $\{e_1,e_2,\ldots,e_m\}$ is an orthonormal basis of $\mathbb{C}^m$. On the other hand by lemma \ref{lm1} we have $(\Psi \mathcal{K}_{\Theta})^\perp= \Psi \Theta \hdcm \oplus \mathcal{K}_\Psi.$ Thus by combining this two results we have the following :$$S(\Psi \mathcal{K}_{\Theta})^\perp \subset (\Psi \mathcal{K}_{\Theta})^\perp+ span\{\widetilde{\Psi_i}\}_{i=1}^m.$$
	Since by hypothesis $\Theta$ is of the form \eqref{123456}, then the dimensions of both $(\Psi \mathcal{K}_{\Theta})^\perp$ and $\Psi \mathcal{K}_{\Theta}$ are infinite and hence it is a half space. This completes the proof.
\end{proof}

In proposition \ref{pr1} we have seen that every nearly invariant subspace for $S^*$ is an almost invariant subspace for $S$. The next proposition says that the converse of this result is not true that is, there exists an almost invariant half space for S which is not nearly $S^*$- invariant. 
\begin{ppsn}
	Let $\Theta \in  H ^{\infty}_{\mathcal{L}(\C ^m)}(\D)$ be as in (\ref{123456})
	with an extra assumption that  $\Theta(0)=0$. Then  $(\Theta \mathcal{K}_{\Theta})^\perp$ is an almost invariant half space for S of defect m but not nearly invariant for $S^*$. 	
\end{ppsn}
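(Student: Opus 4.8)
The statement splits into two independent claims, and only the second needs fresh ideas. For the first claim I would simply invoke Proposition~\ref{pr2} with $\Psi=\Theta$: the matrix $\Theta$ is of the diagonal inner shape \eqref{12345} required by Lemma~\ref{lm1}, and since it is of the form \eqref{123456} with at least one non-rational diagonal entry, Proposition~\ref{pr2} gives at once that $(\Theta\mathcal{K}_\Theta)^\perp$ is an almost invariant half space for $S$ of defect $m$. So the whole point is to produce one function exhibiting the failure of near $S^*$-invariance.

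I would begin from the explicit description of the space. Applying Lemma~\ref{lm1} with $\Psi=\Theta$ (and $r=m$) gives
\[
N:=(\Theta\mathcal{K}_\Theta)^\perp=\Theta^2\,\hdcm\oplus\mathcal{K}_\Theta,
\qquad N^\perp=\Theta\mathcal{K}_\Theta .
\]
Because $\Theta$ is diagonal and inner, both summands split coordinatewise and $\Theta^2\hdcm\perp\mathcal{K}_\Theta$, so membership in $N$ can be tested slot by slot and $N^\perp=\Theta\mathcal{K}_\Theta$ is the natural place to detect escape. The plan is to find $F\in N$ with $F(0)=0$ whose backward shift $S^*F$ lands in $N^\perp\setminus\{0\}$, which immediately forces $S^*F\notin N$.

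The hypothesis $\Theta(0)=0$ is what powers the construction: it forces $\theta_i(0)=0$, so $\psi_i:=\theta_i/z$ is again inner and $\theta_i=z\psi_i$ for every $i$. Fix an index $i$ and set $F:=\Theta^2 e_i\in\Theta^2\hdcm\subseteq N$, where $e_i$ is the $i$-th standard basis vector of $\mathbb{C}^m$. Since $\Theta^2(0)=0$ we get $F(0)=0$, hence $S^*F=F/z$, whose only nonzero coordinate is $\theta_i^2/z=z\psi_i^2$ in slot $i$; that is, $S^*F=\Theta(\psi_i e_i)$. The one computation I would actually carry out is $\psi_i\in\mathcal{K}_{\theta_i}$, which follows from $\langle\psi_i,\theta_i g\rangle=\langle\psi_i,z\psi_i g\rangle=\langle 1,zg\rangle=0$ for all $g\in\hdcc$. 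Consequently $\psi_i e_i\in\mathcal{K}_\Theta$ and $S^*F=\Theta(\psi_i e_i)\in\Theta\mathcal{K}_\Theta=N^\perp$, while $\|S^*F\|=\|z\psi_i^2\|=1\neq0$. Thus $S^*F\in N^\perp\setminus\{0\}$, so $S^*F\notin N$, proving that $N$ is not nearly invariant for $S^*$.

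I do not expect a serious obstacle once the orthogonal picture $N=\Theta^2\hdcm\oplus\mathcal{K}_\Theta$ is recorded; the only genuine point of care is ensuring the chosen test function really escapes $N$ rather than folding back into it, and the cleanest way to guarantee this is to arrange, as above, that $S^*F$ lands \emph{entirely} inside $N^\perp$. I would double-check the coordinatewise orthogonality $\Theta^2\hdcm\perp\mathcal{K}_\Theta$ and the identity $N^\perp=\Theta\mathcal{K}_\Theta$ (both immediate from $\Theta$ being diagonal inner) so that exhibiting the single vector $\Theta(\psi_i e_i)$ suffices to conclude.
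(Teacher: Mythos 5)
Your proof is correct, and it takes a recognizably different route in the part that matters. You and the paper use the very same witness $F=\Theta^2e_i$ (the paper takes $e_j$ with $\theta_j$ non-rational, but that choice is only needed for the half-space claim, not here), and both proofs rest on Lemma~\ref{lm1} and on the factorization forced by $\Theta(0)=0$. The difference is how each proof certifies $S^*F\notin(\Theta\mathcal{K}_\Theta)^\perp$. The paper argues by contradiction: it assumes $S^*F\in\Theta^2\hdcm\oplus\mathcal{K}_\Theta$, writes $S^*F=\Theta^2H+K$, uses $S^*F=\Theta\Theta_1e_j\in\Theta\hdcm$ to kill the $\mathcal{K}_\Theta$-component via $\mathcal{K}_\Theta\cap\Theta\hdcm=\{0\}$, and then derives the absurdity $H=\tfrac{1}{z}\otimes e_j\notin\hdcm$. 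You instead compute $S^*F$ exactly: with $\theta_i=z\psi_i$ you get $S^*F=z\psi_i^2e_i=\Theta(\psi_ie_i)$, and your one genuinely new ingredient, the verification $\langle\psi_i,\theta_i g\rangle=\langle 1,zg\rangle=0$ showing $\psi_i\in\mathcal{K}_{\theta_i}$, places $S^*F$ squarely in $\Theta\mathcal{K}_\Theta=N^\perp$; since $\lVert S^*F\rVert=1$, membership in $N$ is impossible because $N\cap N^\perp=\{0\}$. What your version buys is a positive statement strictly stronger than the paper's (the backward shift of the witness is \emph{orthogonal} to the space, not merely outside it), it avoids the contradiction scaffolding and the decomposition bookkeeping, and it only uses Lemma~\ref{lm1} for the easy inclusion $\Theta^2\hdcm\subseteq N$, which could even be checked directly as the paper does. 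What the paper's version buys is that it needs no observation about $\psi_i$ lying in the model space: it runs entirely on the direct-sum description of $N$ and the uniqueness of components. Both the identity $N^\perp=\Theta\mathcal{K}_\Theta$ (valid since $T_\Theta$ is an isometry, so $\Theta\mathcal{K}_\Theta$ is closed) and the coordinatewise splitting you rely on are sound.
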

\begin{proof}
	Since $\Theta$ is of the form \eqref{123456}, then by proposition \ref{pr2} we conclude that $(\Theta \mathcal{K}_{\Theta})^\perp$ is an almost invariant for $S$. 
	Note that $\Theta$ is an inner multiplier of the form \eqref{123456} with at least one $\theta_j$ (say) is not rational. Now $\Theta (0)=0$ implies that $\theta _i(0)=0 ~\text{for all}~ i\in \{1,2,...,m\}.$ Let $F=\Theta ^2e_j$, where $\{e_1,e_2,\ldots,e_m\}$ is an orthonormal basis of $\mathbb{C}^m$. Then $F\in \hdcm$ and $F(0)=0.$ Note that  for any $K\in \mathcal{K}_\Theta$, $\langle F,\Theta K\rangle = \langle \Theta ^2 e_j,\Theta K\rangle =0$ and hence $F\in (\Theta \mathcal{K}_{\Theta})^\perp$. On the contrary, let us assume that $(\Theta \mathcal{K}_{\Theta})^\perp$ is nearly invariant for $S^*$.  Therefore $S^*F \in (\Theta \mathcal{K}_{\Theta})^\perp = \Theta ^2\hdcm \oplus \mathcal{K}_\Theta$ (by lemma \ref{lm1}) and hence 
	\begin{equation}\label{eqp1}
	S^*F(z)= \frac{F(z)}{z}= \Theta ^2(z)H(z)+ K(z),
	\end{equation}
	for some $H\in\hdcm$ and $K \in \mathcal{K}_{\Theta}$. On the other hand $\Theta (0)=0$ implies that there exists an another inner multiplier $\Theta _1\in  H ^{\infty}_{\mathcal{L}(\mathbb{C}^m,\mathbb{C}^m)}(\D,)$ such that $\Theta (z)=z\Theta _1(z),~ \forall z\in \D $ and hence 
	\begin{equation}\label{eqp2}
	S^*F(z)=\frac{F(z)}{z}=\frac{\Theta ^2(z)e_j}{z}=\Theta (z)\Theta_1(z)e_j \in \Theta\hdcm.
	\end{equation}
	Combining \eqref{eqp1} and \eqref{eqp2} we conclude that $K \in \mathcal{K}_{\Theta}\cap \Theta\hdcm$ and therefore $K(z)=0.$ This implies that $H(z)=\frac{1}{z}\otimes e_j \in \hdcm$ which is not the case. This completes the proof.

\end{proof}

\section{Classification of almost invariant subspaces}
The main aim of this section is to describe completely the almost invariant subspaces for the shift and its adjoint acting on the vector valued Hardy space $\hdcm$. At first we  begin with the definition of nearly invariant subspace for $S^*$ with finite defect on the vector valued Hardy space which already introduced  in \cite{CGP} for the scalar valued Hardy space $\hdcc$.

\begin{dfn}
	A closed subspace $\mcl \subset \hdcm$ is said to be nearly $S^*$-invariant with defect p if and only if there is an $p$-dimensional subspace $\fcl \subset \hdcm$ (which may be taken to be orthogonal to $\mcl$ ) such that if $F \in M,F(0)=0$ then $S^*F$ $\in M\oplus \fcl$. We say that $M$ is $S^*$ almost invariant with defect $p$ if and only if $S^*\mcl \subset \mcl \oplus \fcl $; where dim $\fcl =p$.
\end{dfn}
Characterization of the subspaces of a vector-valued Hardy space that are nearly $S^*$ invariant was due to Challender, Chervot, Partington \cite{CCP} which provides a vectorial generalization of a result of Hitt \cite{HIT}. Recently Challeder, Gallardo, Partington (C-G-P) gives a complete characterization of nearly $S^*$ invariant  subspaces with finite defect in the scalar valued Hardy space $\hdcc$ \cite{CGP}. Here our principle aim is to provide a complete characterization of nearly $S^*$ invariant subspaces with finite defect in the vector valued Hardy space $\hdcm$. Before going to the main result of this section we first start with the following two lemmas. Note that the first lemma already proved in \cite{CCP} but for reader's convenience we are providing a proof herewith.  
\begin{lma}
	Let $\mcl$ be a closed subspace of $\hdcm $ such that all functions in $\mcl$ do not vanish at $0$. Then
	$$ 1\leq\dim{(\mcl\ominus(\mcl\cap z\hdcm))}\leq m.$$
\end{lma}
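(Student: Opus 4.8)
The plan is to bound the dimension of the \emph{wandering subspace} $\wcl:=\mcl\ominus(\mcl\cap z\hdcm)$ by exploiting the evaluation-at-$0$ map. First I would observe that since $z\hdcm$ consists exactly of those functions vanishing at the origin, the hypothesis says precisely that $\mcl\cap z\hdcm=\{F\in\mcl:F(0)=0\}=\ker(E|_{\mcl})$, where $E:\hdcm\to\mathbb{C}^m$ is the bounded evaluation functional $E(F)=F(0)$. Thus $\wcl$ is the orthogonal complement inside $\mcl$ of the kernel of $E|_{\mcl}$, and the rank--nullity philosophy for Hilbert spaces gives a natural isomorphism $\wcl\cong \mathrm{ran}(E|_{\mcl})\subseteq\mathbb{C}^m$.

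For the key steps, carried out in order: I would first verify that $\wcl$ is nontrivial, i.e.\ $\dim\wcl\geq 1$. This follows because the hypothesis guarantees $\mcl\neq\{0\}$ and that $\mcl$ is \emph{not} contained in $z\hdcm$ (otherwise some function would vanish at $0$, indeed every function would); hence $\mcl\cap z\hdcm$ is a proper closed subspace of $\mcl$, forcing its orthogonal complement $\wcl$ to be at least one-dimensional. Second, for the upper bound I would show the evaluation map $E$ restricted to $\wcl$ is injective: if $W\in\wcl$ with $W(0)=0$, then $W\in\mcl\cap z\hdcm$, but $W\perp(\mcl\cap z\hdcm)$ forces $W=0$. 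Consequently $E|_{\wcl}:\wcl\to\mathbb{C}^m$ is an injective linear map, so $\dim\wcl\leq\dim\mathbb{C}^m=m$. Combining the two bounds yields $1\le\dim\wcl\le m$.

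The main obstacle, such as it is, lies in cleanly justifying the identification $\mcl\cap z\hdcm=\ker(E|_{\mcl})$ and the closedness of this intersection (needed so that $\wcl$ is a genuine orthogonal complement within the Hilbert space $\mcl$). The inclusion $z\hdcm\subseteq\ker E$ is immediate since $(zG)(0)=0$; the reverse inclusion uses that any $F\in\hdcm$ with $F(0)=0$ satisfies $F=z\,(S^*F)$ with $S^*F\in\hdcm$, so $F\in z\hdcm$. Closedness of $\mcl\cap z\hdcm$ follows because it equals $\mcl\cap\ker E$, an intersection of a closed subspace with the kernel of the bounded functional $E$. Once these identifications are in place the injectivity argument is routine, so the whole statement reduces to the finite-dimensionality of $\mathbb{C}^m$ via the embedding $W\mapsto W(0)$.
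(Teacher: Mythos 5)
Your proof is correct, but it takes a genuinely different route from the paper's. The paper obtains the upper bound by exhibiting a spanning set: it projects the reproducing kernel vectors $k_0\otimes e_i$ ($i=1,\dots,m$, with $k_0$ the reproducing kernel at $0$ of $\hdcc$) onto $\mcl$, notes that each $F_i=P_{\mcl}(k_0\otimes e_i)$ is orthogonal to $\mcl\cap z\hdcm$ and hence lies in $\wcl$, and concludes that the $F_i$ generate $\wcl$, so $\dim\wcl\leq m$. You instead bound the dimension by an embedding: the evaluation map $E|_{\wcl}:W\mapsto W(0)$ is injective on $\wcl$, because any $W\in\wcl$ with $W(0)=0$ lies in both $\mcl\cap z\hdcm$ and its orthocomplement. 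The two arguments are adjoints of one another: your map $E|_{\wcl}$ has adjoint $v\mapsto P_{\wcl}(k_0\otimes v)$, which is exactly the paper's generating family, and indeed the paper's unproved claim that the $F_i$ generate $\wcl$ rests on precisely the vanishing argument you make explicit (a $W\in\wcl$ orthogonal to all $F_i$ satisfies $W(0)=0$, hence $W=0$). Your version is more elementary and self-contained---no reproducing kernels are needed, only the identification $z\hdcm=\ker E$ via $F=z\,S^*F$---and you also justify the lower bound carefully ($\mcl\cap z\hdcm$ is a proper closed subspace of $\mcl$, so its orthocomplement in $\mcl$ is nontrivial), where the paper simply asserts nontriviality from the hypothesis. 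What the paper's construction buys in return is an explicit generating family for $\wcl$, which is in the spirit of its later use of the matrix $F_0$ whose columns form an orthonormal basis of $\wcl$.
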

\begin{proof}
	Note that by hypothesis the space $\wcl:=\mcl\ominus(\mcl\cap z\hdcm)$ is non-trivial. Let $\dim\wcl=r$. For $i\in\{1,2,...,m\}$, let $F_i=P_\mcl(k_0\otimes e_i)$, where $P_\mcl$ is the orthogonal projection of $\hdcm $ onto $\mcl$ and $k_0$ is the reproducing kernel at 0 in $\hdcc$. Then $F_i\in\mcl$ and $\langle F_i, G\rangle =0$ for all $G\in \mcl\cap z\hdcm$ which implies that $G(0)=0$. This shows that  $\{F_i\}_{i=1}^m$  generates the space $\wcl$ and hence $\dim {\wcl}\leq m$. This completes the proof.
\end{proof}
Let $\hilh$ be a complex separable Hilbert-space and define  $$C_{\cdot 0}:=\Big\{T\in\mathcal{L}(\hilh, \hilh
): {T^*}^nh\to 0~\text{as~}  n\to \infty ~\text{for~all~} h\in\hilh\Big\}.$$
\begin{lma}(Lemma 3.3 in \cite{TIM})\label{abc}
	Suppose $T\in C_{\cdot 0}$ and $\dim~{\mathcal{D}_T} \big(=\overline{Ran} (I-T^*T)^{\frac{1}{2}}\big)<\infty$. Let $\fcl\subset\hilh$ be a closed subspace of finite codimension. Then $TP_\fcl\in C_{\cdot 0}$, where $P_\fcl$ denotes the orthogonal projection onto $\fcl$.
\end{lma}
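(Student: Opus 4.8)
The plan is to work with adjoints and show that $B:=TP_\fcl$ lies in $C_{\cdot 0}$, i.e. $(B^*)^nh\to 0$ for every $h\in\hilh$, where $B^*=P_\fcl T^*$ (recall $P_\fcl=P_\fcl^*$). Fix $h$ and set $y_n=(B^*)^nh$; since $B^*$ is a contraction the norms $\|y_n\|$ decrease to a limit $L\ge 0$, and the whole point is to prove $L=0$. First I would record a defect identity: writing $P_\fcl=I-P_{\fcl^\perp}$ and expanding $BB^*=TP_\fcl T^*=TT^*-TP_{\fcl^\perp}T^*$ one finds
\[
I-BB^*=D_{T^*}^2+T P_{\fcl^\perp}T^*,
\]
a sum of two positive operators, the second being $(P_{\fcl^\perp}T^*)^*(P_{\fcl^\perp}T^*)$ and of finite rank because $\dim\fcl^\perp<\infty$. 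Telescoping $\|h\|^2-\|y_n\|^2=\sum_{j<n}\langle(I-BB^*)y_j,y_j\rangle$ and letting $n\to\infty$ then gives the two summability facts $\sum_j\|D_{T^*}y_j\|^2<\infty$ and $\sum_j\|\epsilon_j\|^2<\infty$, where $\epsilon_j:=P_{\fcl^\perp}T^*y_j$. In particular $\epsilon_j\to 0$, and since $y_{j+1}=P_\fcl T^*y_j=T^*y_j-\epsilon_j$ we obtain the basic recursion $y_{j+1}=T^*y_j-\epsilon_j$.

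Next I would use the two hypotheses to pin down the weak behaviour of the orbit. From $\|D_{T^*}y_j\|\to 0$ and boundedness of $D_{T^*}$ we get $(I-TT^*)y_j\to 0$ in norm, whence $Ty_{j+1}=TT^*y_j-T\epsilon_j=y_j+\eta_j$ with $\eta_j\to 0$; comparing norms gives $\|D_Ty_j\|\to 0$, and as $D_T^2$ is a finite-rank operator whose range spans the finite-dimensional space $\mathcal{D}_T$, this forces $P_{\mathcal{D}_T}y_j\to 0$ in norm. Now let $y$ be any weak cluster point, $y_{n_k}\rightharpoonup y$. Iterating the recursion and using $\epsilon_j\to 0$ shows $y_{n_k+p}\rightharpoonup(T^*)^py$ for every fixed $p\ge 0$. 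Since $v\mapsto\|D_{T^*}v\|$ is weakly lower semicontinuous and $\|D_{T^*}y_{n_k+p}\|\to 0$, we deduce $D_{T^*}(T^*)^py=0$, i.e. $TT^*(T^*)^py=(T^*)^py$, for all $p$; hence $\|(T^*)^{p+1}y\|=\|(T^*)^py\|$ and so $\|(T^*)^py\|=\|y\|$ for every $p$. But $T\in C_{\cdot 0}$ forces $\|(T^*)^py\|\to 0$, so $y=0$. Thus every weak cluster point of $\{y_n\}$ is $0$, and $y_n\rightharpoonup 0$.

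The remaining, and genuinely hard, step is to upgrade this weak convergence to norm convergence, equivalently to show that the orbit $\{y_n\}$ is precompact (a bounded, weakly null, precompact sequence is norm null). Here the mechanism is that $T^*$ ``drains'': passing to the Sz.-Nagy--Foias model, where $T$ is unitarily equivalent to the restriction of a backward shift $S^*$ to a model space $K_\Theta\subseteq H^2(\mathcal{E})$ ($\Theta$ inner, $\dim\mathcal{E}=\dim\mathcal{D}_{T^*}$), the operator $T^*$ strictly lowers frequency, $\|P_{\ge N}S^*u\|=\|P_{\ge N+1}u\|$, while the projection $P_\fcl$ only injects an error from the finite-dimensional space $\fcl^\perp$. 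Feeding this into the recursion yields tail estimates of the form $\|P_{\ge N}y_{j+1}\|\le\|P_{\ge N+1}y_j\|+\gamma_N\|\epsilon_j\|$ with $\gamma_N=\sup\{\|P_{\ge N}u\|:u\in\fcl^\perp,\ \|u\|\le 1\}\to 0$, which I would combine with $\sum_j\|\epsilon_j\|^2<\infty$ and the strong facts $P_{\mathcal{D}_T}y_j\to 0$ and $(I-TT^*)y_j\to 0$ to control both the high-frequency tails and the (possibly infinite-dimensional) low-frequency part, thereby establishing precompactness.

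I expect this last step to be the main obstacle. The naive termwise bound on the accumulated error $\sum_s\gamma_{N+s}\|\epsilon_{j-s}\|$ closes only under an $\ell^1$-type control such as $\sum_N\gamma_N^2<\infty$, which amounts to $\fcl^\perp\subseteq\operatorname{Dom}\Lambda^{1/2}$ for the number operator $\Lambda$ and need not hold for an arbitrary finite-codimensional $\fcl$; the square-summability $\sum_j\|\epsilon_j\|^2<\infty$ we actually have is strictly weaker, so one must exploit genuine Hilbert-space cancellation among the vectors $(T^*)^s\epsilon_{j-s}$ rather than estimate them term by term. Two routes I would try are: (i) a Lyapunov argument with a fractional frequency weight $\Lambda^t$ ($0<t<1$) tuned so that the finite-defect gain supplied by $T^*$ absorbs the cross terms; and (ii) first proving $y_n\to 0$ on the dense set of initial data in $\operatorname{Dom}\Lambda^{1/2}$ and then extending to all of $\hilh$ using $\|(B^*)^n\|\le 1$ together with density.
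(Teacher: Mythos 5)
Your attempt is not a complete proof, and on this point there is nothing in the paper to fall back on: the paper never proves this lemma, it simply quotes it from Benhida--Timotin \cite{TIM} (Lemma 3.3), so your argument has to stand on its own. The parts you carry out in full are correct: the identity $I-BB^*=D_{T^*}^2+TP_{\fcl^\perp}T^*$, the telescoping bound giving $\sum_j\|D_{T^*}y_j\|^2+\sum_j\|\epsilon_j\|^2<\infty$, the recursion $y_{j+1}=T^*y_j-\epsilon_j$, the deductions $D_Ty_j\to 0$, $P_{\mathcal{D}_T}y_j\to 0$ (this is where $\dim\mathcal{D}_T<\infty$ is used, legitimately, via the fact that $D_T$ is bounded below on the finite-dimensional space $\mathcal{D}_T$), and the weak-cluster-point argument showing $y_n\rightharpoonup 0$. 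But the lemma asserts norm convergence, and the upgrade from weak to norm convergence is precisely what you leave open, offering only two ``routes I would try.'' By your own account, the central step is missing, so this is a correct partial reduction, not a proof.

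The gap is not a technicality that any reasonable completion would fill. Observe that every fact you establish \emph{except} $P_{\mathcal{D}_T}y_j\to 0$ is valid for an arbitrary contraction $T\in C_{\cdot 0}$ and arbitrary finite-codimensional $\fcl$, with no assumption on $\mathcal{D}_T$; so essentially the whole force of the hypothesis $\dim\mathcal{D}_T<\infty$ has to be spent in the step you skip, and your sketch never brings it to bear there. In the model picture this hypothesis appears as the \emph{finite multiplicity of the inner function}: one has $K_\Theta=H^2(\mathcal{D}_{T^*})\ominus\Theta H^2(\mathcal{D}_T)$ with $\dim\mathcal{D}_T<\infty$ (also note it is $T^*$, not $T$, that is unitarily equivalent to $S^*|_{K_\Theta}$; $T$ is the compression of the forward shift), whereas your tail estimates use only the ambient backward-shift structure and the finite-dimensionality of $\fcl^\perp$ --- and, as you yourself demonstrate, those two ingredients alone do not close the estimate, since the accumulated error $\sum_{j}(T^*)^{n-1-j}\epsilon_j$ cannot be controlled termwise under the mere square-summability of $\|\epsilon_j\|$. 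Of your two proposed repairs, route (ii) is structurally sound as a scheme (strong stability does pass from a dense invariant set of initial vectors under the uniform bound $\|(B^*)^n\|\le 1$, and the good set is a linear subspace since $\epsilon_j$ depends linearly on $h$), but it merely relocates the problem: the troublesome vectors $\epsilon_j$ lie in the fixed space $\fcl^\perp$ regardless of how regular $h$ is, so proving density of the good set is the original difficulty again; route (i) is too vague to assess. Until the finite-defect hypothesis is genuinely exploited in the weak-to-norm step --- which is what the cited proof in \cite{TIM} must accomplish --- the argument does not establish the lemma.
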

Now we are in a position to state and prove the main theorem in this section.
\begin{thm}
	Let $\mcl$ be a closed subspace that is nearly $S^*$-invariant with defect 1 in $\hdcm$ and let $E_1\in \hdcm$ be such that $\fcl=\langle E_1 \rangle$ (subspace spanned by the vector $E_1$) is the defect space with $\norm{E_1}=1$. Let $\{W_1,W_2,\ldots,W_r\}$ be an orthonormal basis of $\wcl:=\mcl\ominus(\mcl\cap z\hdcm)$ and let $F_0$ be the $m\times r$ matrix whose columns are $W_1,W_2,\ldots,W_r$. Then\vspace*{0.1in}\\
	(i) in the case where there are functions in $\mcl$ that do not vanish at $0$,
	\begin{equation}
	\begin{split}
	&\mcl=\Big\{F\in \hdcm : F=F_0K_0+zk_1E_1 \quad \text{and}\\
	&\hspace{2.5in} (K_0,k_1)\in \kcl\subset \hdcr\times\hdcc \Big\},\label{M}
	\end{split}
	\end{equation}
	where\begin{align*}
	&\kcl=\Big\{ (K_0,k_1)\in\hdcrone:\exists~ F\in \mcl \text{~such~that~}\\
	&\hspace{3.5in} F=F_0K_0+zk_1E_1\Big\}
	\end{align*}
	is a closed $S^*\oplus\cdots \oplus S^*$-invariant subspace of the vector valued Hardy space $\hdcrone$, and $\norm{F}^2=\norm{K_0}^2+\norm{k_1}^2$.\vspace*{0.1in}\\
	(ii)  In the case where all functions in $\mcl $ vanish at $0$.
	$$\mcl= \{F : F(z)= zk_1(z)E_1(z) : k_1\in \kcl\}, $$ 
	where $\kcl$ is now a closed $S^*$- invariant subspace of the  Hardy space $\hdcc$ and $\norm{F}^2=\norm{k_1}^2 .$\\
	
	\noindent Conversely, if a closed subspace $\mcl$ of the vector valued Hardy space $\hdcm$ has a representation like $(i)$ or $(ii)$ as above, then it is a nearly $S^*$-invariant subspace of defect 1. 
\end{thm}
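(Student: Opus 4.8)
The plan is to prove both representations by a single recursive ``peeling'' construction driven by the defect-$1$ near invariance, and then to verify the converse by a direct computation. For part (i), I would start from an arbitrary $F\in\mcl$ and repeatedly split off the $\wcl$-component. Set $H_0=F$ and, for each $j$, write $H_j=P_\wcl H_j+(H_j-P_\wcl H_j)$, where $P_\wcl$ is the orthogonal projection onto $\wcl=\mcl\ominus(\mcl\cap z\hdcm)$ and $H_j-P_\wcl H_j\in\mcl\cap z\hdcm$ vanishes at $0$. Applying $S^*$ to this second piece and using that $\mcl$ is nearly $S^*$-invariant with defect $1$ gives $S^*(H_j-P_\wcl H_j)=H_{j+1}+c_{j+1}E_1$ with $H_{j+1}\in\mcl$ and $c_{j+1}\in\C$ (recall $E_1\perp\mcl$). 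Telescoping this recursion produces, after $n$ steps, $F=\sum_{j=0}^{n-1}z^jP_\wcl H_j+\sum_{j=1}^{n}c_jz^jE_1+z^nH_n$, together with the Pythagorean identity $\norm{F}^2=\sum_{j=0}^{n-1}\big(\norm{P_\wcl H_j}^2+|c_{j+1}|^2\big)+\norm{H_n}^2$, the orthogonality at each stage coming from $P_\wcl H_j\perp(\mcl\cap z\hdcm)$ and $E_1\perp\mcl$. In particular the sequences $v_j:=(\langle H_j,W_i\rangle)_{i=1}^r\in\C^r$ and $(c_j)$ are square-summable, so $K_0:=\sum_j z^jv_j\in\hdcr$ and $k_1:=\sum_j c_{j+1}z^j\in\hdcc$ are well defined; since $T_{F_0}$ is an isometry the partial sums $\sum_{j<n}z^jP_\wcl H_j=T_{F_0}\big(\sum_{j<n}z^jv_j\big)$ converge to $F_0K_0$, and matching Taylor coefficients (the remainder $z^nH_n$ is supported in degrees $\ge n$) then yields $F=F_0K_0+zk_1E_1$ with $(K_0,k_1)\in\kcl$.

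The crux is to upgrade the lower bound $\norm{F}^2\ge\norm{K_0}^2+\norm{k_1}^2$ to equality, i.e.\ to show $\norm{H_n}\to0$. Here Lemma~\ref{abc} enters: the recursion reads $H_{n+1}=P_\mcl S^*(I-P_\wcl)H_n$, so $H_n=X^nF$ for the contraction $X=\big(P_\mcl S^*|_\mcl\big)(I-P_\wcl)|_\mcl$, a finite-rank modification of the compression of $S^*$ to $\mcl$ composed with the projection onto the finite-codimension subspace $\mcl\cap z\hdcm$ (finiteness is supplied by $\dim\wcl\le m$). Applying Lemma~\ref{abc} to $S\in C_{\cdot0}$ (with $\dim\mathcal{D}_S=0$) and a finite-codimension subspace, and matching the $C_{\cdot0}$/$C_{0\cdot}$ variance by passing to adjoints, forces the relevant powers to tend strongly to $0$, whence $\norm{H_n}\to0$. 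This simultaneously completes the representation and gives the norm identity $\norm{F}^2=\norm{K_0}^2+\norm{k_1}^2$; the resulting isometry makes the coefficient map injective, so $\kcl$ is closed and isometrically isomorphic to $\mcl$. For invariance of $\kcl$ under $S^*\oplus\cdots\oplus S^*$ I would observe that one step of the recursion sends $F$ to $H_1\in\mcl$, whose canonical coefficient pair is exactly $(S^*K_0,S^*k_1)$ (the sequences $v_j,c_j$ simply shift), so $(S^*K_0,S^*k_1)\in\kcl$. Part (ii) is the degenerate case $\wcl=\{0\}$, where the same recursion runs with $P_\wcl=0$ and produces only the tower $F=zk_1E_1$, $\kcl\subset\hdcc$ being $S^*$-invariant.

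For the converse, assume $\mcl$ has the stated form and take $F=F_0K_0+zk_1E_1\in\mcl$ with $F(0)=0$. Using $S^*(zk_1E_1)=k_1E_1$, the commutation identity $S^*(F_0K_0)=F_0(S^*K_0)+(S^*F_0)K_0(0)$, and $k_1-zS^*k_1=k_1(0)$, I obtain $S^*F=\big(F_0 S^*K_0+z(S^*k_1)E_1\big)+(S^*F_0)K_0(0)+k_1(0)E_1$, where the first bracket lies in $\mcl$ because $(S^*K_0,S^*k_1)\in\kcl$ by the assumed invariance. The key simplification is that $F_0(0)$ is injective: if $F_0(0)w=0$ then $F_0w\in\wcl$ vanishes at $0$, hence lies in $\wcl\cap(\mcl\cap z\hdcm)=\{0\}$, forcing $w=0$ since the $W_i$ are orthonormal. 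Consequently $F(0)=F_0(0)K_0(0)=0$ gives $K_0(0)=0$, the term $(S^*F_0)K_0(0)$ vanishes, and $S^*F\in\mcl\oplus\langle E_1\rangle$, establishing near $S^*$-invariance with defect $1$.

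The main obstacle I anticipate is precisely the convergence/norm-identity step, namely $\norm{H_n}\to0$. It does not follow from square-summability of the coefficients alone, because $E_1$ lies in $H^2$ rather than $H^\infty$, so the tower $\sum_j c_jz^jE_1$ need not converge in $\hdcm$ a priori; one genuinely needs the operator-theoretic input of Lemma~\ref{abc}, and the delicate point is to cast the recursion operator in the exact form to which that lemma applies, in particular reconciling the $C_{\cdot0}$ output of the lemma with the $C_{0\cdot}$ (strong power convergence) statement actually required.
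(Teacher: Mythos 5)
Your proposal is correct and takes essentially the same route as the paper's own proof: the identical peeling recursion with its Pythagorean bookkeeping, the identical appeal to Lemma~\ref{abc} --- resolving the $C_{\cdot 0}$ versus strong-power-convergence issue exactly as the paper does, by writing the iterates as sandwiched adjoint powers $G_k=P_1S^*\big(SP_1P_2\big)^{*(k-1)}P_2F$ of an operator to which the lemma applies (the paper's small trick, which you flag as the delicate point, is to replace $P_\mcl$ by the projection $P_1$ with kernel $\langle E_1\rangle$, legitimate since $S^*$ of the peeled piece lies in $\mcl\oplus\langle E_1\rangle$) --- and the identical converse computation via $K_0(0)=0$ from the injectivity of $F_0(0)$. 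Your coefficient-shift justification of the $S^*\oplus\cdots\oplus S^*$-invariance of $\kcl$ is just a rephrasing of the paper's algebraic comparison, so nothing essential differs.
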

\begin{proof}			
	$(i)$ By hypothesis $\mcl\nsubseteq z\hdcm$.
	Let $P_\wcl$ denote the orthogonal projection of $\mcl$ onto $\wcl$. If $F\in\mcl$, then $P_\wcl(F)$ can be written as  $$P_\wcl(F)(z)=a_{0,1}W_1(z)+\cdots+a_{0,r}W_r(z),$$ and hence for each $z \in \D$, $F(z)=P_\wcl(F)(z)+F_1(z)$, where $F_1\in\mcl\cap\wcl^\perp$. Since $\{W_1,W_2,\ldots,W_r\}$ forms an orthonormal basis of $\wcl$, then we have the following norm identity: 
	\begin{align*}
	\norm{F}^2&=|a_{0,1}|^2+|a_{0,2}|^2+\cdots+|a_{0,r}|^2+\norm{F_1}^2\\	&=\norm{A_0}^2 +\norm{F_1}^2,\text{~where~}A_0=(a_{0,1},a_{0,2},\ldots,a_{0,r})^T.	\end{align*} Note that  $F_1\in\mcl\cap\wcl^\perp$ and hence $F_1(0)=0$. On the other hand since $\mcl$ is a nearly $S^*$-invariant subspace with defect 1, then $S^*F_1=G_1+\beta_1E_1$, where $G_1\in\mcl$ and $\beta_1\in\C$. Therefore $F_1=S(G_1+\beta_1E_1)$ because $F_1(0)=0$ and $SS^*F_1=F_1$. Thus for $z\in \mathbb{D}$ we have 
	\begin{equation}\label{app4}
	F(z)=F_0(z) A_0+zG_1(z) +\beta_1zE_1(z) \quad \text{and} \quad \|F\|^2 = \norm{A_0}^2 + \norm{G_1}^2 + |\beta_1|^2.
	\end{equation}                                                                                                                                                                                                                                                        
	Now we repeat the above process starting with $G_1$. Then $G_1=F_0A_1+F_2$ with $F_2\in \mathcal{M}$ and $F_2(0)=0$. Similarly by using the properties of $\mathcal{M}$ we conclude that $S^*F_2=G_2+\beta_2E_1$ for some $G_2\in \mathcal{M}$ and $\beta_2\in \mathbb{C}$ which again implies that $ F_2=zG_2+\beta_2zE_1$.
	Therefore in the second iteration we have
	\begin{equation*}
	F(z)=F_0(z)(A_0+A_1z)+z^2G_2(z)+(\beta_1z+\beta_2z^2)E_1(z) \quad \quad (z\in \mathbb{D})
	\end{equation*}
	and $\|F\|^2 = \norm{A_0}^2+\norm{A_1}^2+ \norm{G_2}^2 + |\beta_1|^2 + |\beta_2|^2.$
	If we continue the above process at the $k$-th iteration we obtain,
	\begin{align}\label{app1}
	F(z)=F_0(z)(A_0+A_1z+\cdots+A_{k-1}z^{k-1})+z^kG_k(z)+(\beta_1z+\cdots+\beta_kz^k)E_1(z)
	\end{align}
	and
	\begin{align}\label{app2}
	\norm{F}^2=\sum_{j=0}^{k-1}\norm{A_j}^2+\norm{G_k}^2+\sum_{j=1}^{k}
	|\beta_j|^2.
	\end{align}
	Moreover from the above iterations we also note that  $G_k=P_1S^*P_2(G_{k-1})$, where $P_1$ and $P_2$ are the orthogonal projections with kernel $\langle E_1 \rangle$ and $\wcl$ respectively. Since $S\in C_{\cdot 0}$, ~$\dim{\mathcal{D}_S<\infty}$ and $P_1$ is an orthogonal projection with finite dimensional kernel, then by applying lemma \ref{abc} we  conclude that $SP_1\in C_{\cdot 0}$. Furthermore, since ~$\dim{\mathcal{D}_{SP_1}<\infty}$ and $P_2$ is an orthogonal projection with finite dimensional kernel, then by applying lemma \ref{abc} once again we  conclude that $SP_1P_2\in C_{\cdot 0}$. Again from the above iterations we note that 
	\begin{align*}
	G_k&=(P_1S^*P_2)^k(F) =P_1S^*(P_2P_1S^*)^{k-1}P_2(F)=P_1S^*(SP_1P_2)^{*k-1}P_2(F)
	\end{align*}
	and hence $\norm{G_k}\leq \norm{(SP_1P_2)^{*k-1}P_2(F)}\to 0, ~\text{as}~ k\to \infty.$
	Consequently from the above equations \eqref{app1} and \eqref{app2} we can write
	\begin{equation} \label{app3}
	F(z)=F_0(z)K_0(z)+zk_1(z) E_1(z)\quad \text{and} \quad \norm{F}^2=\norm{K_0}^2+\norm{k_1}^2,
	\end{equation}
	where
	$$K_0(z)=\sum_{j=0}^{\infty}A_jz^j, ~k_1(z)=\sum_{j=1}^{\infty}\beta_jz^{j-1} $$
	and the sums converge in $\hdcr$ and $\hdcc$ norm respectively. Thus finally we say that if $F\in \mcl$ then 
	\begin{equation*}
	F=F_0K_0+zk_1E_1,
	\end{equation*}
	where $(K_0,k_1)\in\hdcr\times\hdcc$. Recall that $\hdcr\times\hdcc$ can be identified with $\hdcrone$. Define the subspace $\kcl$ of $\hdcrone$ as follows:
	\begin{align*}
	&\kcl=\Big\{ (K_0,k_1)\in\hdcrone:\exists~ F\in \mcl \text{~such~that~}\\
	&\hspace{3.5in} F=F_0K_0+zk_1E_1\Big\}.
	\end{align*}
	Then by using \eqref{app3} we conclude that $\kcl$ is a closed subspace of $\hdcrone$. Now it remains to show that $\kcl$ is $S^*\oplus\cdots\oplus S^*$-invariant  in $\hdcrone$. Indeed, let $(K_0,k_1)\in\kcl$. Then there exists $F$ in $\mcl$ such that $F=F_0K_0+zk_1E_1$. On the other hand
	\begin{align*}
	F&=F_0K_0+zk_1E_1 =F_0A_0+F_0(K_0-K_0(0))+zk_1E_1\\
	&=F_0A_0+\{F_0(K_0-K_0(0))+z(k_1-k_1(0))E_1+zk_1(0)E_1\}\\
	&=F_0A_0+\underbrace{z(F_0S^*K_0+zS^*k_1E_1)}+z\beta_1E_1,
	\end{align*}
	which along with equation \eqref{app4} implies $F_0S^*K_0+zS^*k_1E_1=G_1\in \mathcal{M}$, which proves that $\kcl$ is $\underbrace{S^*\oplus\cdots\oplus S^*}_{r+1}$-invariant.
	
	\noindent Conversely, let $\mcl$ be a closed subspace of $\hdcm$ which has a representation like \eqref{M}.
	Let $F\in\mcl$ be such that $F(0)=0$. Then there exists $(K_0,k_1)$ in $\kcl$ such that $F=F_0K_0+zk_1E_1$. Now it is easy to observe that $\big\{W_i(0)\big\}_{i=1}^r$ is linearly independent which follows from the fact that $\wcl$ is a linear space and all functions in $\wcl$ do not vanish at $0$.
	On the other hand $F(0)=0$ and the fact $\big\{W_i(0)\big\}_{i=1}^r$ is linearly independent together implies $K_0(0)=0$. 
	Thus 
	\begin{align*}
	S^*F(z)&=\dfrac{F(z)-F(0)}{z}=\dfrac{F_0(z)K_0(z)+zk_1(z)E_1(z)-F_0(0)K_0(0)}{z}\\
	&=F_0(z)S^*K_0(z)+(SS^*k_1(z))E_1(z)+k_1(0)E_1(z)
	\end{align*}
	and hence $S^*F=F_0S^*K_0+zS^*k_1E_1+k_1(0)E_1$. On the other hand since $\kcl$ is $S^*\oplus\cdots\oplus S^*$-invariant in $\hdcrone$, then we have $(S^*K_0,S^*k_1)\in\kcl$ and hence $F_0S^*K_0+zS^*k_1E_1\in\mcl$. This shows that $S^*F\in\mcl\oplus\fcl$ and consequently $\mcl$ is nearly $S^*$- invariant with defect one.
	
	\vspace*{0.1in}
	
	(ii) If $\mcl\subseteq z\hdcm$, then $\wcl=\{0\}$. Therefore by applying the similar kind of algorithm as in $(i)$, $F$ can be written as $F(z)=zk_1(z)E_1(z)$ for some $\hdcc$ fucntion $k_1$. In other words $\mcl$ has the following representation. $$\mcl= \{F : F(z)= zk_1(z)E_1(z) : k_1\in \kcl\}, $$ where $\kcl$ is a closed $S^*$- invariant subspace of the Hardy space $\hdcc$ and $\norm{F}^2=\norm{k_1}^2 .$ This completes the proof.
\end{proof}

In general for finite defect $p$ the analogous calculations produce the following result.

\begin{thm}\label{ab} \footnote[1]{Recently this result also obtained independently by Ryan O\textquoteright Loughlin in \cite{OR}.}
	Let $\mcl$ be a closed subspace that is nearly $S^*$-invariant with defect $p$ in $\hdcm$ and let $\{E_1,E_2,.....E_p \}$ be any orthonormal basis for the $p$-dimensional defect space $\fcl$. Let $\{W_1,W_2,\ldots,W_r\}$ be an orthonormal basis of $\wcl:=\mcl\ominus(\mcl\cap z\hdcm)$ and let $F_0$ be the $m\times r$ matrix whose columns are $W_1,W_2,\ldots,W_r$. Then\vspace*{0.1in}\\
	(i) in the case where there are functions in $\mcl$ that do not vanish at $0$,
	\begin{equation}\label{main1}
	\mcl= \Big\{F : F(z)= F_0(z)K_0(z)+ \sum_{j=1}^{p} zk_j(z)E_j(z) : (K_0,k_1,\ldots,k_p)\in \kcl \Big\},
	\end{equation}
	where 
	$\kcl \subset \hdcr \times \underbrace{\hdcc\times\cdots\times \hdcc}_{p} $ is a closed $S^*\oplus\cdots\oplus S^*$- invariant subspace of the vector valued Hardy space $ H ^2_{\mathbb{C}^{r+p}}(\D)$ and $$\norm{F}^2=\norm{K_0}^2+\sum_{j=1}^{p}\norm{k_j}^2 .$$
	
	\noindent (ii) In the case where all the functions in $\mcl $ vanish at $0$,
	\begin{equation}\label{main2}
	\mcl= \Big\{F : F(z)= \sum_{j=1}^{p} zk_j(z)E_j(z) : (k_1,\ldots,k_p)\in \mathcal{K} \Big\},
	\end{equation}
	with the same notion as in $(i)$ except that $\mathcal{K}$ is now a closed 	$S^*\oplus \cdots\oplus S^*$- invariant subspace of the vector valued Hardy space $ H ^2_{\mathbb{C}^{p}}(\D)$ and $$\norm{F}^2=\sum_{j=1}^{p}\norm{k_j}^2 .$$
	Conversely, if a closed subspace $\mcl$ of the vector valued Hardy space $\hdcm$ has a representation like $(i)$ or $(ii)$ as above, then it is a nearly $S^*$-invariant subspace of defect $p$. 
\end{thm}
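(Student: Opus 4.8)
The plan is to generalize the defect-$1$ theorem verbatim, replacing the single defect vector $E_1$ and its scalar coefficient $k_1$ by the orthonormal family $\{E_1,\ldots,E_p\}$ and the $p$-tuple of scalar coefficients $(k_1,\ldots,k_p)$. First I would set up the iteration exactly as in the defect-$1$ case. Given $F\in\mcl$, write $F=P_\wcl(F)+F_1$ with $F_1\in\mcl\cap\wcl^\perp$, so that $F_1(0)=0$; the orthonormal basis $\{W_1,\ldots,W_r\}$ gives the norm split $\norm{F}^2=\norm{A_0}^2+\norm{F_1}^2$ where $A_0$ collects the $\wcl$-coefficients. Nearly $S^*$-invariance with defect $p$ now yields $S^*F_1=G_1+\sum_{j=1}^p\beta_{1,j}E_j$ with $G_1\in\mcl$, and since $F_1(0)=0$ we recover $F_1=zG_1+z\sum_{j=1}^p\beta_{1,j}E_j$. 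Because $\{E_1,\ldots,E_p\}$ is orthonormal and orthogonal to $\mcl$, the Pythagorean norm identity $\norm{F}^2=\norm{A_0}^2+\norm{G_1}^2+\sum_{j=1}^p|\beta_{1,j}|^2$ holds. Iterating on $G_1,G_2,\ldots$ produces, at the $k$-th stage,
\begin{align*}
F(z)&=F_0(z)\sum_{i=0}^{k-1}A_iz^i+z^kG_k(z)+\sum_{j=1}^p\Big(\sum_{i=1}^k\beta_{i,j}z^i\Big)E_j(z),\\
\norm{F}^2&=\sum_{i=0}^{k-1}\norm{A_i}^2+\norm{G_k}^2+\sum_{j=1}^p\sum_{i=1}^k|\beta_{i,j}|^2.
\end{align*}

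Next I would establish the decay $\norm{G_k}\to 0$, which is the crux of the argument and the step I expect to be the main obstacle. As in the defect-$1$ proof, $G_k=(P_1S^*P_2)^k(F)$, where now $P_1$ is the orthogonal projection with kernel $\fcl=\langle E_1,\ldots,E_p\rangle$ (a $p$-dimensional, hence finite-dimensional, kernel) and $P_2$ is the projection with kernel $\wcl$. Since $S\in C_{\cdot 0}$ and $\dim\mathcal{D}_S<\infty$, two successive applications of Lemma~\ref{abc} (first with the finite-codimensional range of $P_1$, then with that of $P_2$) give $SP_1P_2\in C_{\cdot 0}$. Rewriting $G_k=P_1S^*(SP_1P_2)^{*\,k-1}P_2(F)$ and using $\norm{P_1S^*}\le 1$ then forces $\norm{G_k}\le\norm{(SP_1P_2)^{*\,k-1}P_2(F)}\to 0$. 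The only genuinely new point relative to defect $1$ is checking that enlarging the kernel of $P_1$ from dimension $1$ to dimension $p$ does not break the hypotheses of Lemma~\ref{abc}; since any finite-dimensional kernel still gives finite codimension of the range, the lemma applies unchanged.

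With the decay in hand, taking $k\to\infty$ gives the representation $F=F_0K_0+\sum_{j=1}^p zk_jE_j$ with $K_0(z)=\sum_{i\ge 0}A_iz^i\in\hdcr$ and $k_j(z)=\sum_{i\ge 1}\beta_{i,j}z^{i-1}\in\hdcc$, together with the isometric norm identity $\norm{F}^2=\norm{K_0}^2+\sum_{j=1}^p\norm{k_j}^2$; the convergence of all series is guaranteed by this identity. I would then define $\kcl\subset H^2_{\mathbb{C}^{r+p}}(\D)$ as the set of tuples $(K_0,k_1,\ldots,k_p)$ arising this way, and the norm identity shows $\kcl$ is closed. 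To prove $\kcl$ is $S^*\oplus\cdots\oplus S^*$-invariant I would mimic the defect-$1$ computation: for $(K_0,k_1,\ldots,k_p)\in\kcl$ with witness $F$, algebraically isolate the zeroth-order terms to write $F=F_0A_0+z\big(F_0S^*K_0+z\sum_j S^*k_j\,E_j\big)+z\sum_j\beta_{1,j}E_j$, and compare with the first iterate $F=F_0A_0+zG_1+z\sum_j\beta_{1,j}E_j$ to identify $F_0S^*K_0+z\sum_j S^*k_j\,E_j=G_1\in\mcl$, which exhibits $(S^*K_0,S^*k_1,\ldots,S^*k_p)\in\kcl$.

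For the converse I would assume $\mcl$ has the form \eqref{main1} and take $F=F_0K_0+\sum_j zk_jE_j\in\mcl$ with $F(0)=0$. Since all functions in $\wcl$ are nonvanishing at $0$ and $\wcl$ is a linear space, the vectors $\{W_i(0)\}_{i=1}^r$ are linearly independent, and $F(0)=F_0(0)K_0(0)=0$ forces $K_0(0)=0$; the $E_j$ terms vanish at $0$ automatically because of the factor $z$. Applying $S^*$ and using $S^*(zk_jE_j)=(SS^*k_j)E_j+k_j(0)E_j$ gives $S^*F=F_0S^*K_0+z\sum_j S^*k_j\,E_j+\sum_j k_j(0)E_j$; the $S^*\oplus\cdots\oplus S^*$-invariance of $\kcl$ places the first two groups of terms back in $\mcl$, while the residual $\sum_j k_j(0)E_j$ lies in $\fcl$, so $S^*F\in\mcl\oplus\fcl$ and the defect is $p$. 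Finally, case $(ii)$ is the specialization $\wcl=\{0\}$, $r=0$: the same iteration runs with $F_0$ absent, yielding $F=\sum_j zk_jE_j$ with $(k_1,\ldots,k_p)\in\kcl\subset H^2_{\mathbb{C}^p}(\D)$ a closed $S^*\oplus\cdots\oplus S^*$-invariant subspace and $\norm{F}^2=\sum_j\norm{k_j}^2$.
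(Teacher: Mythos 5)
Your proposal is correct and follows exactly the route the paper intends: the paper gives no separate proof of Theorem~\ref{ab}, stating only that ``the analogous calculations'' from its detailed defect-$1$ proof yield the general case, and your write-up is precisely that generalization (same iteration, same use of Lemma~\ref{abc} with the only new observation being that a $p$-dimensional kernel still leaves $P_1$ with finite-codimensional range, same identification of $G_1$ for the $S^*\oplus\cdots\oplus S^*$-invariance, and the same converse computation). Nothing further is needed.
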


\begin{crlre}
	A closed subspace $\mcl \subset \hdcm$ is an almost invariant subspace for $S^*$ with defect p if and only if it satisfies the conditions of the above Theorem \ref{ab} together with an extra condition that $S^* W_i \in \mcl \oplus \fcl$ for all $i=1,2,\ldots,r$ in case $(i)$, while case $(ii)$ is unchanged.
\end{crlre}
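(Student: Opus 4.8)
The plan is to read off almost $S^*$-invariance as the strengthening of nearly $S^*$-invariance obtained by extending the defect condition from the functions that vanish at the origin to all of $\mcl$, keeping the same defect space $\fcl$. By definition $\mcl$ is almost invariant for $S^*$ with defect $p$ exactly when $S^*\mcl \subseteq \mcl \oplus \fcl$ with $\dim\fcl = p$, whereas nearly invariance only demands $S^*F \in \mcl \oplus \fcl$ for those $F \in \mcl$ with $F(0)=0$. The almost-invariant condition thus trivially implies the nearly-invariant one with the identical $\fcl$, so an almost invariant subspace of defect $p$ is in particular nearly invariant of defect $p$ and, by Theorem \ref{ab}, admits the stated representation. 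All that remains is to isolate the extra requirement that closes the gap between the two notions.

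The mechanism is the orthogonal decomposition $\mcl = \wcl \oplus (\mcl \cap z\hdcm)$ together with the elementary identity $\mcl \cap z\hdcm = \{F \in \mcl : F(0)=0\}$, which holds because $z\hdcm$ is precisely the set of Hardy functions divisible by $z$. Writing an arbitrary $F \in \mcl$ as $F = P_\wcl F + F_1$ with $F_1 \in \mcl \cap z\hdcm$ and using linearity of $S^*$ gives $S^*F = S^*(P_\wcl F) + S^*F_1$. Nearly invariance, already in hand from the Theorem \ref{ab} representation, controls the second term, namely $S^*F_1 \in \mcl \oplus \fcl$. Hence $S^*F \in \mcl \oplus \fcl$ for every $F \in \mcl$ if and only if $S^*(P_\wcl F) \in \mcl \oplus \fcl$ for every $F$; and since $P_\wcl F$ sweeps out all of $\wcl = \operatorname{span}\{W_1,\ldots,W_r\}$ as $F$ runs over $\mcl$, and $\mcl \oplus \fcl$ is a linear subspace, this is equivalent to $S^* W_i \in \mcl \oplus \fcl$ for each $i = 1,\ldots,r$. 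Necessity follows by taking $F = W_i$, and sufficiency by linearity, which is exactly the stated case $(i)$.

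For case $(ii)$ every function of $\mcl$ vanishes at $0$, so $\wcl = \{0\}$ and $\mcl \cap z\hdcm = \mcl$; there are then no vectors $W_i$, and the nearly-invariant condition $S^*F \in \mcl \oplus \fcl$ already ranges over all $F \in \mcl$, so almost and nearly invariance coincide and no extra condition is needed, which is why case $(ii)$ is unchanged. The argument is essentially bookkeeping once the decomposition is set up, so I anticipate no genuine obstacle; the one point deserving care is to carry the single defect space $\fcl$ of dimension $p$ through both conditions, so that the defect does not silently grow when passing between nearly- and almost-invariance. This is automatic here because the supplementary requirement $S^* W_i \in \mcl \oplus \fcl$ is imposed relative to the very $\fcl$ supplied by Theorem \ref{ab}.
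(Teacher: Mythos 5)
Your proof is correct and takes essentially the same approach as the paper: your orthogonal splitting $F = P_\wcl F + F_1$ is exactly the paper's decomposition $F = F_0K_0(0) + K$ (a $\wcl$-component plus a function in $\mcl$ vanishing at $0$), with nearly invariance handling the vanishing part and the hypothesis $S^*W_i \in \mcl \oplus \fcl$ handling the $\wcl$-part. The forward direction, obtained by restricting the almost-invariance condition to the $W_i \in \mcl$, also matches the paper's argument.
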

\begin{proof}
	If $\mcl$  is an almost invariant subspace for $S^*$ with defect $p$, then it is nearly $S^*$-invariant subspace with defect $p$. Thus it satisfies the conditions of the above Theorem \ref{ab} and since $W_1, W_2,\ldots,W_r\in \mcl$, then from the hypothesis it follows that $S^* W_i \in \mcl \oplus \fcl$ for all $i=1,2,\ldots,r$.
	Conversely, assume that $\mcl $ satisfies the conditions of Theorem \ref{ab} together with $S^*W_i \in \mcl \oplus \fcl$ for all $i=1,2,\ldots,r$.
	Thus for any $F \in \mcl$ we have
	$$F=F_0K_0+ \sum_{j=1}^{p}zk_jE_j=F_0K_0(0)+K,$$ where $$K(z)=F_0(z)(K_0(z)-K_0(0))+ \sum_{j=1}^{p}zk_j(z)E_j(z).$$
	Observe that $K(0)=0$ and $F_0K_0(0)\in \mcl$ implies $K \in \mcl$. Since $\mcl$ is nearly $S^*$ invariant with defect $p$, then $S^*K\in \mcl \oplus \fcl$ which along with the fact $S^*W_i\in \mcl \oplus \fcl$ for all $i=1,\ldots,r$ implies $S^*F \in \mcl \oplus \fcl$ and hence $\mcl$ is an almost invariant with defect $p$. This completes the proof.
\end{proof}

\begin{rmrk}
	It is easy to observe that $S^*\mcl \subset \mcl \oplus \fcl$ is equivalent to the condition that $S(\mcl \oplus \fcl)^\perp \subset (\mcl \oplus \fcl)^\perp \oplus \fcl$.
	Therefore almost invariant subspaces for $S$ can be characterized by $S^*$- invariant subspaces.
\end{rmrk}

\section{Characterization of nearly invariant subspaces in terms of shift invariant subspaces }
In this section our main aim is to give a connection between nearly $S^*$-invariant  subspaces and $S$-invariant subspaces using our main result in the previous section.
In other words we try to characterize $\mcl ^\perp$ in terms of shift invariant subspaces, where $\mcl$ is a nearly $S^*$- invariant subspace of $\hdcm$ with finite defect $p$. Note that in general it is difficult to deal with the case when the defect has an orthonormal basis consist of arbitrary  functions of $\hdcm$. Therefore we restrict ourselves in the special case when the orthonormal basis for the defect space are bounded analytic functions that is, $\fcl = span \{E_1,E_2,\ldots,E_p\}$, where $E_i \in   H ^{\infty}_{\mathcal{L}(\C,\C ^m)}(\D)$ for $i=1,\ldots,p$.
Let $F_0\in  H ^{2}_{\mathcal{L}(\C^r,\C ^m)}(\D)$ and $T_{F_0}:\hdcr\to\hdcm$ be an operator defined by $T_{F_0}(G)=P(F_0G)$, where $P$ is the Fourier projection of the $L^1(\mathbb{T},\mathbb{C}^m)$ function $F_0G$ on $ H^2_{\mathbb{C}^m}(\D)$.

First we consider the case when $\mcl$ is a nearly $S^*$- invariant subspace of $\hdcm$ which contain functions that do not vanish at $0$ with an extra assumption that $\wcl:=\mcl\ominus(\mcl\cap z\hdcm)$ has an orthonormal basis consist of $ H ^\infty_{\mathbb{C}^m}(\D)$-functions, that is, $\{W_1,\ldots, W_r\}\subseteq   H ^\infty_{\mathbb{C}^m}(\D)$. Therefore $\mathcal{M}$ has the form \eqref{main1} by Theorem \ref{ab} $(i)$.
Now consider $G\in \mcl^{\perp}$, then 
$\langle G,F\rangle =0 ,\forall F \in \mcl$. But for any $F\in \mcl$ we have $$F= F_0K_0 +\sum_{j=1}^{p}Sk_jE_j,$$ where $(K_0,k_1,\ldots,k_p)\in \kcl$ and $\kcl$ is a $S^*$- invariant subspace of $  H ^2_{\mathbb{C}^{r+p}}(\D).$
Therefore
\begin{align*}
\langle G,F\rangle&=\langle G,F_0K_0\rangle_2 +\sum_{j=1}^p\langle G,Sk_jE_j \rangle_2  =\langle G,T_{F_0}K_0\rangle_2 +\sum_{j=1}^p\langle S^*G,k_jE_j \rangle_2\\
&=\langle T^*_{F_0}G,K_0\rangle_{\hdcr} +\sum_{j=1}^p\langle T^*_{E_j}S^*G,k_j \rangle_{\hdcc},
\end{align*}
which ensures $$G\in \mcl^{\perp} \textit{~if~and~only~if~} T^*_{F_0}G \oplus T^*_{E_1}S^*G\oplus T^*_{E_2}S^*G\oplus \cdots\oplus T^*_{E_p}S^*G \in \kcl^{\perp}.$$
Thus 
\begin{equation}\label{main3}
\mcl ^{\perp}=\Big\{G\in \hdcm :T^*_{F_0}G \oplus T^*_{E_1}S^*G\oplus T^*_{E_2}S^*G\oplus \cdots\oplus T^*_{E_p}S^*G \in \kcl^{\perp}\Big \},   
\end{equation}
where $\kcl ^{\perp}$ is a $S$- invariant subspace of $ H ^2_{\mathbb{C}^{r+p}}(\D).$ Conversely, if $\mcl$ is a closed subspace of $\hdcm$ such that $\mcl ^\perp$ is of the form \eqref{main3}, then  $\mcl$ is a nearly $S^*$- invariant subspace of $\hdcm$ with defect $p$.
Similarly we can also obtain the expression of $\mcl ^\perp$ in the case  when $\mcl \subset z\hdcm$. We can then formulate our main result in this section as follows.
\begin{thm}\label{abcd}
	Let $\mcl$ be a closed subspace of  $\hdcm$ which is nearly $S^*$- invariant with defect $p$ with an extra condition that the orthonormal basis for both the defect space and the space $\mcl\ominus(\mcl\cap z\hdcm)$  are bounded analytic functions. Then,\vspace*{0.1in}\\
	(i) in the case where there are functions in $\mcl$ that do not vanish at $0$,
	$$\mcl ^\perp =\Big\{G\in \hdcm :T^*_{F_0}G \oplus T^*_{E_1}S^*G\oplus T^*_{E_2}S^*G\oplus \cdots\oplus T^*_{E_p}S^*G \in \kcl^{\perp} \Big\},$$
	where  $\{W_i\}_{i=1}^{r}$ is an orthonormal basis of $\wcl := \mcl \ominus (\mcl \cap z\hdcm)$,  $F_0$ is the $m\times r$ matrix whose columns are $W_1,W_2,\ldots,W_r$, the defect space $\fcl$ has an orthonormal basis $\{E_1,E_2,\ldots,E_p \}\subseteq  H ^{\infty}_{ \mathcal{L}(\mathbb{C},\mathbb{C}^m)}(\D)$  and $\kcl ^\perp \subseteq \hdcr \times \underbrace{\hdcc\times \cdots\times \hdcc}_{p} $ is a closed $S\oplus \cdots\oplus S$- invariant subspace of the vector valued Hardy space $ H ^2_{\mathbb{C}^{r+p}}(\D)$.
	\vspace*{0.1in}\\
	(ii)\hspace*{0.1in} In the case where all functions in $\mcl$	vanish at 0, 
	$$\mcl ^\perp =\Big\{G\in \hdcm : T^*_{E_1}S^*G\oplus T^*_{E_2}S^*G\oplus \cdots\oplus T^*_{E_p}S^*G \in \kcl^{\perp} \Big\},$$
	where $\{E_1,E_2,.....E_p \}\subseteq H ^{\infty}_{ \mathcal{L}(\mathbb{C},\mathbb{C}^m)}(\D)$ is an orthonormal basis for the $p$ dimensional defect space $\fcl$ and $\kcl ^\perp \subseteq  \underbrace{\hdcc\times \cdots\times \hdcc}_{p} $ is a closed $S\oplus\cdots\oplus S$- invariant subspace of the vector valued Hardy space $ H ^2_{\mathbb{C}^{p}}(\D)$. 
	Conversely, if a closed subspace $\mcl \subset \hdcm$ is such that $\mcl ^\perp$ has a representation as in (i) or (ii), then $\mcl$ is a nearly $S^*$- invariant subspace of defect p.	
\end{thm}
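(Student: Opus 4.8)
The plan is to dualize the representation of $\mcl$ furnished by Theorem \ref{ab}, passing to adjoints of the multiplier operators $T_{F_0}$ and $T_{E_j}$. By hypothesis every $F\in\mcl$ can be written as $F=F_0K_0+\sum_{j=1}^{p}zk_jE_j$ with $(K_0,k_1,\ldots,k_p)\in\kcl$, and the extra assumption that the bases $\{W_i\}_{i=1}^r$ and $\{E_j\}_{j=1}^p$ consist of bounded analytic functions guarantees that $T_{F_0}:\hdcr\to\hdcm$ and each $T_{E_j}:\hdcc\to\hdcm$ are bounded, so that their adjoints are genuinely available for the computation below.

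First I would fix $G\in\hdcm$ and expand $\langle G,F\rangle$ linearly over the decomposition of $F$. The term $F_0K_0$ contributes $\langle G,T_{F_0}K_0\rangle=\langle T^*_{F_0}G,K_0\rangle$, while each term $zk_jE_j=S(T_{E_j}k_j)$ contributes $\langle G,S(T_{E_j}k_j)\rangle=\langle S^*G,T_{E_j}k_j\rangle=\langle T^*_{E_j}S^*G,k_j\rangle$. Collecting these and using the identification $\hdcr\times(\hdcc)^{p}\cong H^2_{\mathbb{C}^{r+p}}(\D)$, I obtain
\[
\langle G,F\rangle=\big\langle\,T^*_{F_0}G\oplus T^*_{E_1}S^*G\oplus\cdots\oplus T^*_{E_p}S^*G,\;K_0\oplus k_1\oplus\cdots\oplus k_p\,\big\rangle.
\]
Since $G\in\mcl^\perp$ precisely when this quantity vanishes for every $(K_0,k_1,\ldots,k_p)\in\kcl$, the displayed identity yields at once the characterization $G\in\mcl^\perp$ if and only if $T^*_{F_0}G\oplus T^*_{E_1}S^*G\oplus\cdots\oplus T^*_{E_p}S^*G\in\kcl^\perp$. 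That $\kcl^\perp$ is $S\oplus\cdots\oplus S$-invariant is just the orthocomplement of the $S^*\oplus\cdots\oplus S^*$-invariance of $\kcl$ recorded in Theorem \ref{ab}.

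For case (ii), where all functions in $\mcl$ vanish at $0$, the space $\wcl$ is trivial and the $F_0K_0$ term disappears throughout; the identical computation then gives the stated formula involving only the components $T^*_{E_j}S^*G$, with $\kcl^\perp$ now a subspace of $H^2_{\mathbb{C}^{p}}(\D)$. For the converse in either case, I would read the equivalence backwards: if $\mcl^\perp$ has the displayed form with $\kcl^\perp$ being $S\oplus\cdots\oplus S$-invariant, then the inner-product identity forces $\mcl=(\mcl^\perp)^\perp$ to consist exactly of the functions $F_0K_0+\sum_{j}zk_jE_j$ with $(K_0,k_1,\ldots,k_p)\in\kcl$, so that Theorem \ref{ab} certifies $\mcl$ as nearly $S^*$-invariant with defect $p$.

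The main obstacle is less analytic depth than careful bookkeeping of the various pairings. One must track that $\langle G,T_{F_0}K_0\rangle$ uses the $\hdcm$ inner product on the left while $\langle T^*_{F_0}G,K_0\rangle$ uses the $\hdcr$ inner product on the right, and similarly for the scalar components. The place where the boundedness hypothesis is truly needed is exactly in making $T^*_{F_0}$ and $T^*_{E_j}$ well-defined bounded operators; without it the adjoint manipulation has no meaning, which is why the theorem restricts to bounded analytic bases. Finally, the norm identity $\norm{F}^2=\norm{K_0}^2+\sum_{j}\norm{k_j}^2$ from Theorem \ref{ab} ensures that the representing tuple $(K_0,k_1,\ldots,k_p)$ is determined by $F$, so the pairing above is non-degenerate and the equivalence is genuinely reversible.
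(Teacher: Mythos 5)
Your proposal is correct and follows essentially the same route as the paper: the paper's proof is exactly this dualization, expanding $\langle G,F\rangle$ over the representation $F=F_0K_0+\sum_{j=1}^{p}Sk_jE_j$ from Theorem~\ref{ab}, moving $T_{F_0}$ and $T_{E_j}$ to the other side as adjoints (which is where the bounded-analytic-basis hypothesis enters, just as you note), and reading off that $G\in\mcl^\perp$ if and only if $T^*_{F_0}G\oplus T^*_{E_1}S^*G\oplus\cdots\oplus T^*_{E_p}S^*G\in\kcl^\perp$, with the converse and case (ii) handled by the same computation. No gaps relative to the paper's own level of detail.
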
		

Finally we end the section with the following remark. 
\begin{rmrk}
	In this section we describe nearly invariant subspaces under the backward shift with finite defect with an extra assumption that bounded analytic functions form an orthonormal basis of the defect space. But we do expect that the version of Theorem \ref{abcd} still hold without this assumption.
	
\end{rmrk}

\section*{Acknowledgements}
\textit{ We are extremely grateful to Dr. Bata Krishna Das for many fruitful discussions and his valuable comments. We would also like to thank Prof. Joydeb Sarkar for introducing this area to us.}

\end{document}